\newtheorem{lem}{Lemma}[section]
\newtheorem{thm}[lem]{Theorem}
\theoremstyle{remark}
\newtheorem{rem}[lem]{Remark}
\numberwithin{equation}{section}
\def \R {\mathbb{R}}
\def \N {\mathbb{N}}
\newcommand{\ep}{\varepsilon}
\newcommand{\ue}{u_\ep}
\newcommand{\gep}{g_\ep}
\newcommand{\uk}{u_{\ep,k}}
\newcommand{\lame}{\lambda_\ep}
\newcommand{\lamk}{\lambda_{\ep,k}}
\newcommand{\om}{\Omega}
\newcommand{\edeux}{\displaystyle{\frac{1}{\ep^2}}}
\newcommand{\eun}{\displaystyle{\frac{1}{\ep}}}
\title[Convergence of a mass conserving Allen-Cahn equation]
{Convergence of a mass conserving Allen-Cahn equation whose
Lagrange multiplier is  nonlocal {\it and} local }
\keywords{Mass conserving Allen-Cahn equation, singular
perturbation, volume preserving mean curvature flow, matched
asymptotic expansions, error estimates} \subjclass[2010]{35R09,
35B25, 35C20, 53A10}
\author[M. Alfaro]{Matthieu Alfaro}
\address{Univ. Montpellier 2\\ I3M\\ UMR CNRS 5149\\ CC051\\
 Place Eug\`ene Bataillon\\ 34095 Montpellier Cedex 5\\ France}
\email{malfaro@math.univ-montp2.fr}
\author[P. Alifrangis]{Pierre Alifrangis}
\address{Univ. Montpellier 2\\ I3M\\ UMR CNRS 5149\\ CC051\\
 Place Eug\`ene Bataillon\\ 34095 Montpellier Cedex 5\\ France}
\email{alifrang@math.univ-montp2.fr}
\thanks{The authors are supported
by the French Agence Nationale de la Recherche within the project
IDEE (ANR-2010-0112-01).}
\begin{document}

%\tableofcontents

\begin{abstract}
We consider the mass conserving Allen-Cahn equation proposed in
\cite{Bra-Bre}: the Lagrange multiplier which ensures the
conservation of the mass contains not only nonlocal but also local
effects (in contrast with \cite{Che-Hil-Log}). As a parameter
related to the thickness of a diffuse internal layer tends to
zero, we perform formal asymptotic expansions of the solutions.
Then, equipped with these approximate solutions, we rigorously
prove the convergence to the volume preserving mean curvature
flow, under the assumption that classical solutions of the latter
exist. This requires a precise analysis of the error between the
actual and the approximate Lagrange multipliers.
\end{abstract}

\maketitle

\section{Introduction}\label{s:intro}

\noindent {\bf Setting of the problem.} In this paper, we consider
$\ue=\ue(x,t)$ the solutions of an Allen-Cahn equation with
conservation of the mass proposed in \cite{Bra-Bre}, namely
\begin{equation}\label{mass-modified}
\partial _t \ue=\Delta \ue
+\edeux\left(f(\ue)-\frac{ \int _\Omega f(\ue)}{\int _ \Omega
\sqrt{4W(\ue)}}\sqrt{4W(\ue)}\right)\quad \text{ in }
\Omega\times(0,\infty),
\end{equation}
supplemented with the homogeneous Neumann boundary conditions
\begin{equation}\label{boundary}
 \frac{\partial \ue}{\partial \nu} (x,t) = 0 \quad\text{ on
} \partial \Omega\times(0,\infty),
\end{equation}
and the initial conditions
\begin{equation}\label{initial}
 \ue(x,0)=\gep (x) \quad\text{ in }\Omega.
 \end{equation}
Here $\Omega$ is a smooth bounded domain in $\R^N$ ($N\geq 2$) and
$\nu$ is the Euclidian unit normal vector exterior to $\partial
\om$. The small parameter $\ep>0$ is related to the thickness of a
diffuse interfacial layer. The term
\begin{equation}\label{etoile}
-\frac{ \int _\Omega f(\ue(x,t))\,dx}{\int _ \Omega
\sqrt{4W(\ue(x,t))}\,dx}\sqrt{4W(\ue(x,t))}
\end{equation}
 can be understood as a
Lagrange multiplier for the mass constraint
\begin{equation}\label{masse-conservee}
\frac d {dt} \int _\Omega \ue(x,t)\,dx=0.
\end{equation}
Let us notice that \eqref{etoile} combines nonlocal {\it and}
local effects (see below).

The nonlinearity is given by $f(u):=-W'(u)$, where $W(u)$ is a
double-well potential with equal well-depth, taking its global
minimum value at $u=\pm 1$. More precisely we assume that $f$ is
$C^2$ and has exactly three zeros $-1<0<+1$ such that
\begin{equation}\label{der-f}
f'(\pm 1)<0, \quad f'(0)>0\quad\ \hbox{(bistable nonlinearity)},
\end{equation}
and that $f$ is odd, which implies in turn that
\begin{equation}\label{int-f}
\int _ {-1} ^ {+1} f(u)\,du=0\quad\ \hbox{(balanced
nonlinearity)}.
\end{equation}
The condition \eqref{der-f} implies that the potential $W(u)$
attains its local minima at $u=\pm 1$, and \eqref{int-f} implies
that $W(-1)=W(+1)$. In other words, the two stable zeros of $f$,
namely $\pm 1$, have ``balanced" stability. For the sake of
clarity, in the sequel we restrict ourselves to the case where
\begin{equation}\label{cubique}
f(u)=u(1-u^2), \quad W(u)=\frac{1}{4} (1-u^2)^2.
\end{equation}
This will slightly simplify the presentation of the asymptotic
expansions in Section \ref{s:formal} and is enough to capture all
the features of the problem.

The initial data $\gep$ are {\it well-prepared} in the sense that
they already have sharp transition layers whose profile depends on
$\ep$. The precise assumptions on $\gep$ will appear in Theorem
\ref{th:results}. For the moment, it is enough to note that
\begin{equation}\label{gamma-zero}
\lim _{\ep\to 0} \gep=\begin{cases}
\, -1 &\text{ in the region enclosed by $\Gamma _0$}\\
\, +1 &\text{ in the region enclosed between $\partial \Omega$ and
$\Gamma _0$,}
\end{cases}
\end{equation}
where $\Gamma _0 \subset\subset \Omega$ is a given smooth bounded
hypersurface without boundary.

Our goal is to investigate the behavior of the solutions $\ue$ of
\eqref{mass-modified}, \eqref{boundary}, \eqref{initial}, as $\ep
\to 0$.

\medskip

\noindent{\bf Related works and comments.} It is long known that,
even for not well-prepared initial data, the sharp interface limit
of the Allen-Cahn equation $\partial _t \ue=\Delta \ue +\ep
^{-2}f(\ue)$ moves by its mean curvature. As long as the classical
motion by mean curvature exists, it was proved in \cite{Che} and
an optimal estimate of the thickness of the transition layers was
provided in \cite{A-Hil-Mat}. Let us also mention that, recently,
the first term of the actual profile of the layers was identified
\cite{A-Mat}. If the mean curvature flow develops singularities in
finite time, then a generalized motion can be defined via
level-set methods and viscosity solutions, \cite{Eva-Spr1} and
\cite{Che-Gig-Got}. In this framework, the convergence of the
Allen-Cahn equation to generalized motion by mean curvature was
proved by Evans, Soner and Souganidis \cite{Eva-Son-Sou} and a
convergence rate was obtained in \cite{A-Dro-Mat}.

The above results rely on the construction of efficient sub- and
super-solutions. Nevertheless, when
 comparison principle does not hold, a different method exists for
well-prepared initial data. It was used e.g. by Mottoni and
Schatzman \cite{Mot-Sch2} for the Allen-Cahn equation (without
using the comparison principle!); Alikakos, Bates and Chen
\cite{Ali-Bat-Che} for the convergence of the Cahn-Hilliard
equation
\begin{equation}\label{hilliard}
\partial _t \ue +\Delta \left(\ep \Delta \ue +\eun f(\ue)\right)=0,
\end{equation}
to the Hele-Shaw problem; Caginalp and Chen \cite{Cag-Che} for the
phase field system... The idea is to first construct solutions
$\uk$ of an approximate problem thanks to matched asymptotic
expansions. Next, using the lower bound of a linearized operator
around such constructed solutions, an estimate of the error $\|\ue
-\uk\|_{L^p}$ is obtained for some $p\geq 2$.

 Using these technics, Chen, Hilhorst
and Logak \cite{Che-Hil-Log} considered the Allen-Cahn equation
with conservation of the mass
\begin{equation}\label{mass-cons}
\partial _t \ue=\Delta \ue+\edeux \left(f(\ue)-\frac{1}{|\Omega|}\int_\Omega
f(\ue)\right),
\end{equation}
proposed by \cite{Rub-Ste} as a model for phase separation in
binary mixture. They proved its convergence to the volume
preserving mean curvature flow
\begin{equation}\label{MMC-vol-preserv}
V_n=-\kappa +\frac{1}{|\Gamma_t|}\int_{\Gamma_t}\kappa\,
dH^{n-1}\quad\text{ on } \Gamma _t .
\end{equation}
Here $V_n$ denotes the velocity of each point of $\Gamma_t$ in the
normal exterior direction and $\kappa$ the sum of the principal
curvatures, i.e. $N-1$ times the mean curvature. For related
results, we also refer the reader to the works \cite{Bro-Sto}
(radial case, energy estimates) and \cite{Hen-Hil-Mim} (case of a
system).

In a recent work, Brassel and Bretin \cite{Bra-Bre} proposed the
mass conserving Allen-Cahn equation \eqref{mass-modified} as an
approximation for mean curvature flow with conservation of the
volume \eqref{MMC-vol-preserv}. According to their formal approach
and numerical computations, it seems that \lq\lq
\eqref{mass-modified} has better volume preservation properties
than \eqref{mass-cons}". Let us notice that, as far as the local
Allen-Cahn equation is concerned, such an improvement of the
accuracy of phase field solutions, thanks to an adequate
perturbation term, was already performed in \cite{Gar-Sti} or in
\cite{Cag-Che-Eck}.

 In the present paper we  prove the convergence of
\eqref{mass-modified} to \eqref{MMC-vol-preserv}. Observe that in
\eqref{mass-cons} the conservation of the mass
\eqref{masse-conservee} is ensured by the Lagrange multiplier $
-\frac{1}{|\Omega|}\int_\Omega f(\ue) $ which is nonlocal, whereas
in the considered equation \eqref{mass-modified} the Lagrange
multiplier \eqref{etoile} combines nonlocal and local effects. On
the one hand, this will make the outer expansions completely
independent of the inner ones, and will cancel the $\ep$ order
terms of all expansions (see Section \ref{s:formal}). On the other
hand, this makes the proof of Theorem \ref{th:results} much more
delicate since further accurate estimates are needed (see
subsection \ref{ss:penible}). In other words, in the study
\cite{Che-Hil-Log} of \eqref{mass-cons}, it turns out that the
nonlocal Lagrange multipliers \lq\lq disappear" while estimating
the error estimate $\ue-\uk$. This will not happen in our context
and our key point will be the following. Roughly speaking, our
estimates of subsection \ref{ss:penible} will make appear an
integral of the error {\it on} the limit hypersurface which must
be compared with the $L^2$ norm of the error. If the former is
small compared with the latter then the Gronwall's lemma is
enough. If, as expected, the error concentrates so that the former
becomes large compared with the latter, then the situation is
favorable: a \lq\lq sign minus" intends at decreasing the $L^2$
norm of the error (see subsection \ref{ss:penible} and Remark
\ref{rem:concentrate} for details).

To conclude let us mention the work of Golovaty \cite{Gol}, where
a related equation with a nonlocal/local Lagrange multiplier is
considered. The convergence to a weak (via viscosity solutions)
volume preserving motion by mean curvature is proved via energy
estimates. As mentioned before, our method is different and allows
to capture a fine error estimate between the actual solutions and
the constructed approximate solutions.

\section{Statement of the results}\label{s:results}

\noindent{\bf The flow \eqref{MMC-vol-preserv}.}  Let us first
recall a few interesting features of the averaged mean curvature
flow \eqref{MMC-vol-preserv}. It is volume preserving, area
shrinking and every Euclidian sphere is en equilibrium. The local
in time well posedness in a classical framework is well understood
(see Lemma \ref{lem:mmc} for a statement which is sufficient for
our purpose). It is also known that local classical solutions with
convex initial data turn out to be global. Additionally, there
exist non-convex hypersurfaces (close to spheres) whose flow is
global. For more details on the averaged mean curvature flow
\eqref{MMC-vol-preserv}, we refer the reader to \cite{Gag},
\cite{Hui}, \cite{Esc-Sim} and the references therein.

\begin{lem}[Volume preserving mean curvature flow]\label{lem:mmc}
Let $\om _0\subset\subset \om$ be a subdomain such that $\Gamma
_0:=\partial \om _0$ is a smooth hypersurface without boundary.
Then there is $T^{max}\in(0,\infty]$ such that the averaged mean
curvature flow \eqref{MMC-vol-preserv}, starting from $\Gamma _0$,
has a unique smooth solution $\cup_{0\leq  t <T^{max}}
(\Gamma_t\times \{t\})$ such that $\Gamma _t\subset\subset
\Omega$, for all $t\in [0,T^{max})$.
\end{lem}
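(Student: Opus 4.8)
The plan is to reduce the geometric evolution \eqref{MMC-vol-preserv} to a scalar quasilinear parabolic equation over the \emph{fixed} reference manifold $\Gamma_0$, and then to invoke the standard maximal regularity theory for such equations. Since $\Gamma_0$ is a smooth compact hypersurface, it possesses a tubular neighborhood on which the nearest-point projection and the signed distance are smooth; I would therefore seek $\Gamma_t$ as a normal graph
\begin{equation*}
\Gamma_t=\{\,x+\rho(x,t)\,\nu_0(x):x\in\Gamma_0\,\},
\end{equation*}
where $\nu_0$ is the outer unit normal of $\Gamma_0$ and $\rho:\Gamma_0\times[0,T)\to\R$ is the unknown height function with $\rho(\cdot,0)=0$. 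As long as $\|\rho\|_{C^1}$ stays small this is an admissible parametrization, and a direct computation gives the normal velocity $V_n=\beta(x,\rho,\nabla_{\Gamma_0}\rho)\,\partial_t\rho$ for a smooth, strictly positive factor $\beta$ equal to $1$ at $\rho=0$. Likewise the sum of principal curvatures of $\Gamma_t$ is a second-order quasilinear operator $\mathcal K(\rho):=a^{ij}(x,\rho,\nabla_{\Gamma_0}\rho)\,\nabla_{ij}^2\rho+\text{lower order}$, whose principal symbol $a^{ij}$ is, near $\rho=0$, a small perturbation of (minus) the Laplace--Beltrami tensor of $\Gamma_0$.

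Substituting into \eqref{MMC-vol-preserv} yields the evolution problem
\begin{equation*}
\partial_t\rho=\frac{1}{\beta}\Bigl(-\mathcal K(\rho)+\Lambda(\rho)\Bigr),\qquad
\Lambda(\rho):=\frac{1}{|\Gamma_t|}\int_{\Gamma_t}\kappa\,dH^{n-1},\qquad \rho(\cdot,0)=0.
\end{equation*}
The key structural observation is that the principal part $-\beta^{-1}\mathcal K$ is, for small $\rho$, a uniformly elliptic second-order operator whose linearization at $\rho=0$ is (to leading order) the Laplace--Beltrami operator $\Delta_{\Gamma_0}$; hence the equation is quasilinear parabolic. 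The nonlocal term $\Lambda(\rho)$ is a lower-order perturbation: being a spatial average it gains regularity, and both $|\Gamma_t|=\int_{\Gamma_0}J(x,\rho,\nabla_{\Gamma_0}\rho)\,dH^{n-1}$ and $\int_{\Gamma_t}\kappa\,dH^{n-1}$ are smooth functionals of $(\rho,\nabla_{\Gamma_0}\rho)$, so $\Lambda$ depends on $\rho$ through at most one derivative and defines a locally Lipschitz map between the relevant Hölder spaces.

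I would then solve this by the now-standard analytic semigroup / maximal regularity approach to quasilinear parabolic equations on manifolds (as developed in \cite{Esc-Sim}): freeze the coefficients at the initial datum to obtain a sectorial operator $A_0$, given to leading order by $-\Delta_{\Gamma_0}$, which generates an analytic semigroup on the little-Hölder space $h^{\alpha}(\Gamma_0)$ with domain $h^{2+\alpha}(\Gamma_0)$; verify that the full right-hand side defines a $C^1$ (hence locally Lipschitz) map between the appropriate interpolation spaces; and set up a contraction on a short time interval. Because the linearization of $\rho\mapsto\beta^{-1}\mathcal K(\rho)$ at $\rho=0$ is elliptic while $\Lambda$ is of lower order, the abstract theorem applies and produces a unique maximal solution $\rho\in C([0,T^{max});h^{2+\alpha})\cap C^1((0,T^{max});h^{\alpha})$, with uniqueness inherited from the contraction argument.

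Parabolic bootstrapping (differentiating the equation and reapplying the regularity theory) upgrades $\rho$ to a smooth solution on $\Gamma_0\times(0,T^{max})$, which yields the smooth flow $\cup_{0\le t<T^{max}}(\Gamma_t\times\{t\})$. Finally, since $\Gamma_0\subset\subset\Omega$ and $t\mapsto\Gamma_t$ is continuous in the Hausdorff metric, the inclusion $\Gamma_t\subset\subset\Omega$ persists for small $t$, and after possibly shrinking $T^{max}$ we may assume it holds throughout $[0,T^{max})$. The main obstacle is not the existence machinery itself but the bookkeeping for the nonlocal multiplier $\Lambda(\rho)$: one must confirm that the averaged curvature is genuinely a lower-order, locally Lipschitz perturbation that neither disturbs the sectoriality of the principal part nor destroys uniqueness. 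This is precisely where the smoothness of the functionals $|\Gamma_t|$ and $\int_{\Gamma_t}\kappa\,dH^{n-1}$ in $(\rho,\nabla_{\Gamma_0}\rho)$, together with the strict positivity of $|\Gamma_t|$ for small $\rho$, is essential.
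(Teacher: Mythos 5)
The paper does not actually prove this lemma; it is taken from the literature (the text points to \cite{Gag}, \cite{Hui}, \cite{Esc-Sim}), and your reduction to a scalar quasilinear parabolic equation for a normal graph over $\Gamma_0$, solved by the analytic-semigroup/maximal-regularity method in little H\"older spaces, is precisely the approach of the cited reference \cite{Esc-Sim}. Your outline is correct; the one step worth making explicit is that $\int_{\Gamma_t}\kappa\,dH^{n-1}$ is a smooth functional of $(\rho,\nabla_{\Gamma_0}\rho)$ alone because the second-order part of $\kappa$ has divergence structure and can be integrated by parts over the closed manifold $\Gamma_0$ --- not merely because averaging ``gains regularity''.
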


In the sequel, for $\Gamma _0$ as in \eqref{gamma-zero}, we fix
$0<T<T^{max}$ and work on $[0,T]$. We define
$$
\Gamma:=\cup_{0\leq t \leq T} (\Gamma_t\times \{t\}), $$ and
denote by $\om _t$ the region enclosed by $\Gamma_t$. Let us
define the step function $\tilde u=\tilde u(x,t)$ by
\begin{equation}\label{u-tilde}
\tilde u(x,t):=\begin{cases}
\, -1 &\text{in } \om _t\\
\, +1 &\text{in } \om \setminus \overline {\om _t}
\end{cases} \quad\text{ for all } t\in[0,T],
\end{equation}
which represents the sharp interface limit of $\ue$ as $\ep \to
0$. Let $d$ be the signed distance function to $\Gamma$ defined by
\begin{equation}
d (x,t)=
\begin{cases}
-&\hspace{-10pt}\mbox{dist}(x,\Gamma _t)\quad\text{for }x\in\om_t \\
&\hspace{-10pt} \mbox{dist}(x,\Gamma _t) \quad \text{for } x\in\om
\setminus \overline{\om _t}.
\end{cases}
\end{equation}

\medskip

\noindent{\bf Main results.} We  rewrite equation
\eqref{mass-modified} as
\begin{equation}\label{mass-modifiedbis}
\partial_t \ue-\Delta \ue
-\edeux\left(f(\ue)-\ep\lame (t)\sqrt{4W(\ue)}\right)=0\quad
\text{ in }\Omega \times(0,\infty),
\end{equation}
by defining
\begin{equation}\label{def-lambda}
\ep\lame(t):= \frac{\int_\Omega f(\ue)}{\int _ \Omega
\sqrt{4W(\ue)}}=\frac{\int_\Omega \ue -{\ue} ^3}{\int _ \Omega
1-{\ue} ^2}.
\end{equation}
Our first main result consists in constructing accurate
approximate solutions.

\begin{thm}[Approximate solutions]\label{th:approx} Let us fix
an arbitrary integer $k>\max (N,4)$. Then there exist $(\uk(x,t),
\lamk(t))_{x\in \bar \Omega,\, 0\leq t\leq T}$ such that
\begin{equation}\label{uek}
 \partial _t \uk-\Delta \uk-\edeux
\left(f(\uk)-\ep\lamk(t)\sqrt{4W(\uk)}\right)=\delta
_{\ep,k}\quad\text{ in }\om \times(0,T),
 \end{equation}
 with
 \begin{equation}\label{reste}
 \Vert \delta
 _{\ep,k}\Vert _{L^\infty(\Omega\times(0,T))}=\mathcal O(\ep ^k)\quad\text{ as } \ep \to 0,%\quad\text{ and }
 % \sup _{0\leq t\leq T} \left|\int _\Omega \delta_{\ep,k}(x,t)\,dx\right|=\mathcal O(\ep ^{k+\frac
  %12})
 \end{equation}
 and
 \begin{equation}\label{uek-neumann}
 \frac{\partial \uk}{\partial \nu}(x,t) =0 \quad \text{ on }\partial \om
 \times(0,T),
 \end{equation}
\begin{equation}\label{uek-mass-cons}
\frac d{dt} \int_\om \uk(x,t)\,dx=0 \quad \text{for all }t\in
(0,T).
\end{equation}
\end{thm}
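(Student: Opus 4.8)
The plan is to construct $(\uk,\lamk)$ by the method of \textbf{matched asymptotic expansions}, combining an outer expansion valid away from $\Gamma_t$ with an inner expansion valid in an $O(\ep)$-neighborhood of $\Gamma_t$, and gluing the two by a cut-off. Since $\Gamma_t\subset\subset\om$ is \emph{given} as the smooth solution of \eqref{MMC-vol-preserv} provided by Lemma~\ref{lem:mmc}, all geometric quantities attached to it are known and smooth on a tubular neighborhood; in particular, on $\Gamma_t$ the signed distance satisfies $d_t=V_n$ and $\Delta d=-\kappa$ (up to the usual sign conventions). I would seek the inner part in the stretched normal variable $z=d(x,t)/\ep$, writing $\uk\sim\sum_{j=0}^{k}\ep^j U_j(z,x,t)$ and $\lamk\sim\sum_{j=0}^{k}\ep^j\lambda_j(t)$, and the outer part as $\sum_{j=0}^{k}\ep^j u_j^{\pm}(x,t)$ in the two phases.

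For the inner expansion I would substitute $z=d/\ep$ into \eqref{uek} (recalling $|\nabla d|=1$ and $\sqrt{4W(u)}=1-u^2$ near the layer) and collect powers of $\ep$. At order $\ep^{-2}$ one gets the \emph{standing wave} $U_{0,zz}+f(U_0)=0$, $U_0(\pm\infty)=\pm1$, $U_0(0)=0$, i.e. $U_0(z)=\tanh(z/\sqrt2)$. At each order $j\ge1$ the profile solves $\mathcal L U_j:=U_{j,zz}+f'(U_0)U_j=R_j$, with $R_j$ built explicitly from $U_0,\dots,U_{j-1}$, the geometry of $\Gamma_t$, and $\lambda_{j-1}$. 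Since $\mathcal L$ has one-dimensional kernel $\mathrm{span}\{U_{0,z}\}$, each $U_j$ exists (decaying exponentially as $z\to\pm\infty$) iff the \textbf{solvability condition} $\int_{\R}R_j\,U_{0,z}\,dz=0$ holds. The crucial structural point I would exploit is that $U_0$ obeys $U_{0,z}=\tfrac1{\sqrt2}(1-U_0^2)=\tfrac1{\sqrt2}\sqrt{4W(U_0)}$, so the \emph{local} Lagrange factor is proportional to the kernel. Hence at first order $R_1=(d_t-\Delta d)U_{0,z}+\lambda_0(1-U_0^2)=\big[(V_n+\kappa)+\sqrt2\,\lambda_0\big]U_{0,z}$ on $\Gamma_t$, lying entirely in $\mathrm{span}\{U_{0,z}\}$; solvability forces $(V_n+\kappa)+\sqrt2\,\lambda_0=0$, which is precisely the velocity law \eqref{MMC-vol-preserv} and, since $\Gamma_t$ is the given flow, determines $\lambda_0(t)=-\tfrac{1}{\sqrt2\,|\Gamma_t|}\int_{\Gamma_t}\kappa$, while simultaneously forcing $R_1\equiv0$ and thus $U_1\equiv0$ — the cancellation of the $\ep$-order term announced in Section~\ref{s:intro}. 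The scheme then proceeds by induction, solvability at order $j$ fixing $\lambda_{j-1}$, up to $j=k$.

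The outer expansion is comparatively simple: inserting $\sum\ep^j u_j^{\pm}$ into \eqref{uek} forces $u_0^{\pm}=\pm1$, and because $\sqrt{4W(u)}=1-u^2$ vanishes at $u=\pm1$ the Lagrange term contributes nothing in the bulk, so the outer profiles decouple from $\lamk$ and, with the oddness of $f$, the $\ep$-order outer terms vanish as well. I would match inner and outer in the overlap region, the exponential convergence $U_j(\pm\infty)\to u_j^{\pm}$ making the matching and cut-off errors exponentially small in $\ep$, and define $\uk$ globally through a cut-off $\chi(d)$. The Neumann condition \eqref{uek-neumann} is essentially free: since $\Gamma_t\subset\subset\om$ the layer never reaches $\dom$, so near the boundary $\uk$ equals its (constant) outer value up to an exponentially small boundary-layer correction that can be added to enforce $\partial_\nu\uk=0$ exactly.

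It remains to verify \eqref{reste} and the exact conservation \eqref{uek-mass-cons}. The estimate \eqref{reste} follows by construction: having solved every order up to $k$ and bounded the matching/cut-off errors, the residual $\delta_{\ep,k}$ is $\mathcal O(\ep^k)$ in $L^\infty(\om\times(0,T))$. For \eqref{uek-mass-cons} I would build $\uk$ so that $\int_\om\uk(\cdot,t)\,dx$ is \emph{exactly} constant in $t$: the leading term already has this property because \eqref{MMC-vol-preserv} preserves $|\om_t|$, and at each order a residual free constant (a shift of the layer, or a constant mode of the outer correction) can be used to keep the integral of the correction time-independent, giving \eqref{uek-mass-cons}. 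Defining $\lamk$ through the discrete analogue of \eqref{def-lambda}, namely $\ep\lamk(t):=\int_\om f(\uk)\big/\int_\om\sqrt{4W(\uk)}$, the bulk reaction and Lagrange terms cancel upon integration over $\om$, so that (using \eqref{uek-neumann}) $\int_\om\delta_{\ep,k}\,dx=\frac{d}{dt}\int_\om\uk\,dx=0$, in harmony with \eqref{reste}. I expect the \textbf{main obstacle} to be this last compatibility: reconciling, at every order, the Fredholm solvability conditions (which pin down the $\lambda_j$ and the layer dynamics) with the mass constraint, while keeping all corrections and the final remainder genuinely of size $\ep^k$ uniformly up to $t=T$.
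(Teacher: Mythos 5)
Your overall strategy (outer/inner expansions in a stretched normal variable, Fredholm solvability for the linearized operator around $\theta_0$, gluing by a cut-off, and a final adjustment for exact mass conservation) is the same as the paper's, and your observations at leading and first order are correct: the local factor $\sqrt{4W(u)}$ is proportional to the kernel element $\theta_0'$, the solvability condition at order $\ep$ reproduces the volume preserving flow and kills the first-order profile, and the outer expansion is exactly $\pm 1$.

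The genuine gap is at orders $j\geq 2$, where you claim that ``solvability at order $j$ fixes $\lambda_{j-1}$'' with the inner variable taken simply as $z=d/\ep$. The solvability condition at order $j$ is a pointwise identity on the interface: for every $(s,t)$ one needs $\int_\R R_j(\rho,s,t)\,\theta_0'(\rho)\,d\rho=0$, and for $j\geq 2$ the source $R_j$ contains $s$-dependent geometric data (the curvature coefficients $b_1,b_2,\dots$) and the lower-order profiles, so its projection on $\theta_0'$ is a genuine function of $s$. A single scalar $\lambda_{j-1}(t)$ cannot cancel an $s$-dependent quantity, so the hierarchy as you have set it up is over-determined and generically unsolvable. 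The paper resolves this by introducing an extra family of unknowns, the distance-correction terms $h_i(s,t)$, i.e.\ by stretching around the perturbed level set $d=\ep h_\ep(S(x,t),t)$ rather than around $\Gamma_t$ itself; the solvability condition then becomes a parabolic equation $\mathcal M^\Gamma h_{i-1}=\sigma\lambda_{i-1}-\sigma^*\int_\R R_{i-1}\theta_0'$ \emph{for} $h_{i-1}(s,t)$, while the $\lambda_i(t)$ are not determined by solvability at all: they come from expanding the mass-conservation identity $\frac{d}{dt}\int_\om \ue=0$, which expresses $\lambda_i$ as an average over $\Gamma_t$ involving $h_i$; substituting back yields a closed, well-posed system for the pair $(h_i,\lambda_i)$. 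Your sketch contains neither the extra unknowns nor this second set of equations, so the induction cannot be carried past the second order. Relatedly, your closing argument for \eqref{uek-mass-cons} is circular: if $\lamk$ is defined by the exact quotient $\int_\om f(\uk)/\int_\om\sqrt{4W(\uk)}$, integrating \eqref{uek} only gives $\frac{d}{dt}\int_\om\uk=\int_\om\delta_{\ep,k}=\mathcal O(\ep^k)$, not zero. The paper instead takes $\lamk$ to be the truncated series $\sum_i\ep^i\lambda_i$ and enforces exact conservation by adding the explicit spatially constant correction $|\om|^{-1}\int_\om\big(\uk^*(x,0)-\uk^*(x,t)\big)\,dx$, which is $\mathcal O(\ep^{k+2})$ and is precisely what degrades the residual from $\mathcal O(\ep^{k+2})$ to the stated $\mathcal O(\ep^k)$.
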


Observe that by integrating \eqref{uek} over $\Omega$ and using
\eqref{uek-neumann} and \eqref{uek-mass-cons}, we see that
\begin{equation}\label{presque-lambda}
\ep \lamk(t)=\frac{\int _\om f(\uk) + \mathcal O(\ep^{k+ 2})}{\int
_\om \sqrt{4W(\uk)}}.
\end{equation}

Then we prove the following estimate, in the $L^2$ norm, on the
error between the approximate solutions $\uk$ and the solutions
$\ue$.

\begin{thm}[Error estimate]\label{th:results} Let us fix an arbitrary integer $k>\max
(N,4)$. Let $\ue$ be the solution of \eqref{mass-modified},
\eqref{boundary}, \eqref{initial} with the initial conditions
satisfying \eqref{gamma-zero} and
\begin{equation}\label{assumptions-initial}
\gep(x)=\uk(x,0)+\phi_\ep(x)\in[-1,1],\quad \int _\Omega \phi _\ep
=0,\quad \Vert \phi _\ep \Vert _{L^2(\Omega)}=\mathcal O(\ep
^{k-\frac 12}).
\end{equation}
Then, there is $C>0$ such that, for $\ep >0$ small enough,
$$
\sup _{0\leq t \leq T} \Vert \ue(\cdot,t)-\uk(\cdot,t)\Vert
_{L^2(\Omega)}\leq C \ep ^{k-\frac 12}.
$$
\end{thm}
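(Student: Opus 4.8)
The plan is to close an $L^2$ energy estimate on the error $R:=\ue-\uk$, the only genuinely delicate point being the difference of Lagrange multipliers $\lame-\lamk$. The first observation, used repeatedly, is that $R$ has zero average: integrating \eqref{mass-modifiedbis} over $\om$ and using \eqref{boundary} together with \eqref{masse-conservee} gives $\int_\om\ue(\cdot,t)=\int_\om\gep$, while \eqref{uek}, \eqref{uek-neumann} and \eqref{uek-mass-cons} give $\int_\om\uk(\cdot,t)=\int_\om\uk(\cdot,0)$; since \eqref{assumptions-initial} imposes $\gep=\uk(\cdot,0)+\phi_\ep$ with $\int_\om\phi_\ep=0$, we obtain $\int_\om R(\cdot,t)=0$ for every $t\in[0,T]$. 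Subtracting \eqref{uek} from \eqref{mass-modifiedbis}, multiplying by $R$, integrating and using the Neumann conditions yields the identity
\[
\tfrac12\tfrac{d}{dt}\|R\|_{L^2(\om)}^2=-\|\nabla R\|_{L^2(\om)}^2+\edeux\int_\om\big(f(\ue)-f(\uk)\big)R-\eun\int_\om\big(\lame\sqrt{4W(\ue)}-\lamk\sqrt{4W(\uk)}\big)R-\int_\om\delta_{\ep,k}R,
\]
in which the forcing is harmless by \eqref{reste} and Cauchy--Schwarz.

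For the Allen--Cahn part I would Taylor expand around $\uk$; with $f(u)=u-u^3$ from \eqref{cubique} this is exact,
\[
\edeux\int_\om\big(f(\ue)-f(\uk)\big)R=\edeux\int_\om f'(\uk)R^2-\edeux\int_\om\big(3\uk R^3+R^4\big),
\]
so that, combined with $-\|\nabla R\|_{L^2(\om)}^2$, the quadratic part is $-\big(\|\nabla R\|_{L^2(\om)}^2-\edeux\int_\om f'(\uk)R^2\big)$, which I would bound above by $C\|R\|_{L^2(\om)}^2$ through a spectral lower bound, uniform in $\ep$, for the linearization around the well-prepared profile $\uk$. The cubic and quartic remainders, as well as the singular contributions produced below, I would absorb inside a continuation argument: under an a priori bound of the type $\edeux\|R\|_{L^\infty(\om)}\le C$ (recovered a posteriori from parabolic regularity since $k$ is large), one has $\edeux\big|\int_\om(3\uk R^3+R^4)\big|\le C\|R\|_{L^2(\om)}^2$.

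The heart of the proof is the Lagrange term. On $[-1,1]$ one has $\sqrt{4W(u)}=1-u^2$, and writing $\lame(1-\ue^2)-\lamk(1-\uk^2)=-\lame(\ue+\uk)R+(\lame-\lamk)(1-\uk^2)$ splits it as
\[
-\eun\int_\om\big(\lame\sqrt{4W(\ue)}-\lamk\sqrt{4W(\uk)}\big)R=\eun\lame\int_\om(\ue+\uk)R^2-\eun(\lame-\lamk)\,I,\qquad I:=\int_\om(1-\uk^2)R.
\]
To treat the second, dangerous, term I would expand $\lame-\lamk$ using \eqref{def-lambda} and \eqref{presque-lambda}: since $\int_\om R=0$ turns both the numerator and the denominator differences into integrals of $R$ against the interface-concentrated weight $1-\uk^2$ (indeed $\int_\om f'(\uk)R=\int_\om(1-3\uk^2)R=3I$), the leading part of $\lame-\lamk$ is proportional to $\ep^{-2}I$; as $\int_\om(1-\ue^2)R=I+\mathcal O(\|R\|_{L^2(\om)}^2)$, the leading part of the dangerous term is
\[
-\eun(\lame-\lamk)\,I=-\frac{c}{\ep^{3}}\,I^{2}+(\text{lower order}),\qquad c>0,
\]
which is exactly the favorable \lq\lq sign minus'' announced in the introduction: when the error concentrates on $\Gamma_t$, so that $|I|$ is comparatively large, this term actively decreases the $L^2$ norm, while in the opposite regime it is dominated by $C\|R\|_{L^2(\om)}^2$. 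The main obstacle is to make this rigorous uniformly in $\ep$: one must control the many remainders in the expansion of $\lame-\lamk$ (which individually carry singular powers of $\ep$) so that each is either bounded by $C\|R\|_{L^2(\om)}^2$ or absorbed by the nonpositive leading square, and one must likewise dispose of the order-$\eun$ local term $\eun\lame\int_\om(\ue+\uk)R^2$. For the latter I would fold the potential $2\eun\lame\uk$ into the spectral estimate of the previous paragraph and exploit the parity of the profile across $\Gamma_t$ (the principal mode is even, while $\uk$ is odd in the signed distance), so that this order-$\eun$ perturbation shifts the bottom of the spectrum only by $\mathcal O(1)$.

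Collecting these estimates yields a differential inequality $\tfrac{d}{dt}\|R\|_{L^2(\om)}^2\le C\|R\|_{L^2(\om)}^2+C\ep^{2k}$. Gronwall's lemma, together with the initial bound $\|R(\cdot,0)\|_{L^2(\om)}=\|\phi_\ep\|_{L^2(\om)}=\mathcal O(\ep^{k-\frac12})$ from \eqref{assumptions-initial}, then gives $\sup_{0\le t\le T}\|R\|_{L^2(\om)}\le C\ep^{k-\frac12}$; a final bootstrap confirms that the a priori bounds persist on all of $[0,T]$, which closes the continuation argument.
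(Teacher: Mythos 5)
Your skeleton matches the paper's: zero average of $R$, the energy identity, the spectral lower bound for the linearization around $\uk$ (including the unbalanced shift coming from the local part of the multiplier, which the paper justifies exactly by the orthogonality you invoke), and above all the identification of the favorable leading term $-c\,\ep^{-3}I^{2}$ with $I=\int_\om(1-\uk^2)R$, which is precisely the quantity $E_k$ and the ``sign minus'' of the paper's Remark 6.3. However, there is a genuine gap at the step you describe as ``one must control the many remainders in the expansion of $\lame-\lamk$ \dots so that each is either bounded by $C\Vert R\Vert^2$ or absorbed by the nonpositive leading square.'' That is not a routine verification; it is the core difficulty, and your proposal does not overcome it. Concretely, the expansion of $\eun(\lame-\lamk)I$ produces, besides $-c\,\ep^{-3}I^2$, the cross term $\ep^{-2}B_k^{-1}\frac{A_k}{B_k}E_k\int_\om 2\uk R$. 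With the only available bounds $\frac{A_k}{B_k}=\mathcal O(\ep)$, $E_k=\mathcal O(\sqrt\ep\,\Vert R\Vert)$, $B_k\sim\ep$ and Cauchy--Schwarz $|\int_\om \uk R|\le C\Vert R\Vert$, this term is only $\mathcal O(\ep^{-3/2}\Vert R\Vert^2)$ --- far too singular for Gronwall, and it has no sign. The paper resolves this by (i) splitting $\int_\om \uk R$ into a tube $\{|d|\le M\ep\}$ and its complement, so that $|\int_{\mathcal T}\uk R|\le C\sqrt\ep\,\Vert R\Vert_{\mathcal T}$; (ii) a dichotomy on $|E_k|$ versus $\ep\sqrt\ep\,\Vert R\Vert_{\mathcal T}$, so that the cross term is either dominated by the square $E_k^2$ or bounded below by $-C\ep^3\Vert R\Vert_{\mathcal T}^2$; and (iii) absorbing the residual $\ep^{-7/5}\Vert R\Vert_{\mathcal T^c}^2$ into the strictly negative $\ep^{-2}\int_{\mathcal T^c}f'(\uk)R^2$ outside the tube. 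Without some version of this three-step absorption your differential inequality does not close.

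A secondary weakness: you propose to handle the cubic and quartic remainders via an a priori bound $\edeux\Vert R\Vert_{L^\infty}\le C$ ``recovered a posteriori from parabolic regularity.'' Passing from the $L^2$ conclusion to an $L^\infty$ bound for a semilinear equation with an $\ep^{-2}$ potential costs negative powers of $\ep$ and is itself a nontrivial claim. The paper avoids it entirely: it uses the interpolation $\Vert R\Vert_{2+p}^{2+p}\le C\Vert R\Vert^{p}\Vert\nabla R\Vert^{2}$ with $p=\min(4/N,1)$, absorbs the result into $-\Vert\nabla R\Vert^2$ under a purely $L^2$ continuation hypothesis $\Vert R\Vert\le(2C_1)^{-1/p}\ep^{4/p}$, and closes the bootstrap from $k-\tfrac12>4/p$. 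You should adopt that route (or supply the $L^\infty$ smoothing estimate explicitly).
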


Let us notice that, since $-1\leq \gep \leq 1$, it follows from
the maximum principle that $-1\leq \ue \leq 1$. Also since $\gep
\not \equiv 1$ and $\gep \not \equiv -1$, the conservation of the
mass implies $\ue\not \equiv 1$ and $\ue \not\equiv -1$, which
shows that the definition of $\ep \lame (t)$ in \eqref{def-lambda}
actually makes sense.

As it will be clear from our construction in Section
\ref{s:approx}, the approximate solutions satisfy
$$
\Vert \uk -\tilde u\Vert _{L^\infty(\{(x,t):\, |d(x,t)|\geq \sqrt
\ep\})}=\mathcal O(\ep^{k+2}),\quad \text{ as } \ep \to 0,
$$
with $\tilde u$  the sharp interface limit defined in
\eqref{u-tilde} via the volume preserving mean curvature flow
\eqref{MMC-vol-preserv} starting from $\Gamma _0$. We can
therefore interpret Theorem \ref{th:results} as a result of
convergence of the mass conserving Allen-Cahn equation
\eqref{mass-modified} to the volume preserving mean curvature flow
\eqref{MMC-vol-preserv}:
$$
\sup _{0\leq t \leq T} \Vert \ue(\cdot,t)-\tilde u(\cdot,t)\Vert
_{L^2(\Omega)}=\mathcal O (\ep ^{1/4}),\quad \text{ as } \ep \to
0.
$$

\medskip

\noindent{\bf Organization of the paper.} The organization of this
paper is as follows. In Section \ref{s:prelim} we present the
needed tools which are by now rather classical. In Section
\ref{s:formal}, we perform formal asymptotic expansions of the
solutions $(\ue(x,t),\lame(t))$. This will enable to construct the
approximate solutions $(\uk(x,t),\lamk(t))$, and so to prove
Theorem \ref{th:approx}, in Section \ref{s:approx}. Last we prove
the error estimate of Theorem \ref{th:results} in Section
\ref{s:error}. In particular and as mentioned before, a precise
understanding of the error between the actual and the approximate
Lagrange multipliers will be necessary (see subsection
\ref{ss:penible}).

\begin{rem} Through the paper, the notation $\psi _\ep \approx
\sum _{i\geq 0} \ep ^i \psi _i$ represents asymptotic expansions
as $\ep \to 0$ and means that, for all integer $k$, $\psi _\ep
=\sum _{i=0}^k \ep ^i \psi _i +\mathcal O (\ep ^{k+1})$.
\end{rem}

\section{Preliminaries}\label{s:prelim}

For the present work to be self-contained, we recall here a few
properties which are classical in the works mentioned in the
introduction, \cite{Rub-Ste}, \cite{Ali-Bat-Che}, \cite{Mot-Sch2},
\cite{Cag-Che}, \cite{Cag-Che-Eck}, \cite{Che-Hil-Log},
\cite{Hen-Hil-Mim}, and the references therein.

\subsection{Some related linearized operators}\label{ss:operator}

We denote by $\theta _0 (\rho):=\tanh ( \frac{\rho}{\sqrt 2})$ the
standing wave solution of
\begin{equation*}
\left\{\begin{array}{ll} {\theta_0}''+f(\theta _0)=0\quad\text{ on
}\R,\vspace{3pt}\\
\theta_0(-\infty)=-1,\quad \theta _0 (0)=0,\quad
\,\theta_0(\infty)=1,
\end{array} \right.
\end{equation*}
which we expect to describe the transition layers of solutions
$\ue$ observed in the stretched variable. Note that, for all $m\in
\N$,
\begin{equation}\label{est-theta}
D^m_\rho[\theta _0(\rho)-(\pm 1)]=\mathcal O(e^{-\sqrt 2|\rho|})
\quad \text{ as }\rho\to\pm\infty.
\end{equation}

We then consider the one-dimensional underlying linearized
operator around $\theta_0$, acting on functions depending on the
variable $\rho$ by
\begin{equation}\label{linoperator}
{\mathcal L}u:=-{u}_{\rho\rho}-f'(\theta_0(\rho))u.
\end{equation}

\begin{lem}[Solvability condition and decay at infinity]\label{lem:solvability}
Let $A(\rho,s,t)$ be a smooth and bounded function on $\R \times U
\times [0,T]$, with $U\subset \R^{N-1}$ a compact set. Then, for
given $(s,t)\in U\times[0,T]$, the problem
\begin{equation}\nonumber
\left\{\begin{array}{l} {\mathcal L} \psi:=-\psi
_{\rho\rho}-f'(\theta _0(\rho))\psi=A(\rho,s,t) \quad\text{ on }\R,\vspace{3pt}\\
\psi(0,s,t)=0,\quad \psi(\cdot,s,t) \in L^\infty (\R),
\end{array} \right.
\end{equation}
has a solution (which is then unique) if and only if
\begin{equation}\label{condition-solvability}
\int_\R A(\rho,s,t){\theta _0} '(\rho)\,d \rho=0.
\end{equation}

Under the condition \eqref{condition-solvability}, assume moreover
that there are real constants $A^\pm$ and an integer $i$ such
that, for all integers $m$, $n$, $l$,
\begin{equation}\label{decay-membre-droite}
D_\rho^m D_s^n D_t^l[A(\rho,s,t)-A^\pm]=\mathcal O(|\rho|^i
e^{-\sqrt 2|\rho|}) \quad \text{ as }\rho\to\pm\infty,
\end{equation}
uniformly in $(s,t)\in U\times[0,T]$. Then
\begin{equation}\label{decay}
D_\rho^m D_s^n D_t^l[\psi(\rho,s,t)-\frac{A^\pm}{f'(\pm
1)}]=\mathcal O(|\rho|^i e^{-\sqrt 2|\rho|})\quad \text{ as
}\rho\to\pm\infty,
\end{equation}
uniformly in $(s,t)\in U\times[0,T]$.
\end{lem}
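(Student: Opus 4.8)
The plan is to treat the problem as a linear second-order ODE in $\rho$ with $(s,t)$ entering only as parameters, exploiting that $\mathcal L$ is a Schr\"odinger-type operator whose kernel is explicitly known. Differentiating the profile equation $\theta_0''+f(\theta_0)=0$ once in $\rho$ gives $\mathcal L\theta_0'=-\theta_0'''-f'(\theta_0)\theta_0'=0$, so $\theta_0'$ is a bounded, everywhere positive element of the kernel that decays like $e^{-\sqrt2|\rho|}$ by \eqref{est-theta}. By reduction of order, a second, linearly independent solution of $\mathcal Lu=0$ is $u_2(\rho)=\theta_0'(\rho)\int_0^\rho \theta_0'(\tau)^{-2}\,d\tau$, with unit Wronskian; since $(\theta_0')^{-2}$ blows up like $e^{2\sqrt2|\rho|}$, the solution $u_2$ grows like $e^{\sqrt2|\rho|}$ at both ends. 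These two solutions are the backbone of every estimate below.

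For the solvability condition I would argue both directions. Necessity is immediate from the formal self-adjointness of $\mathcal L$: multiplying $\mathcal L\psi=A$ by $\theta_0'$ and integrating over $\R$, the boundary terms $[-\psi'\theta_0'+\psi\theta_0'']_{-\infty}^{+\infty}$ vanish because $\theta_0',\theta_0''$ decay exponentially while $\psi$ and $\psi'$ stay bounded (the latter since $\psi''=-f'(\theta_0)\psi-A$ is bounded), whence $\int_\R A\theta_0'=\int_\R \psi\,\mathcal L\theta_0'=0$. For sufficiency and uniqueness I would use variation of parameters, writing $\psi=c_1\theta_0'+c_2u_2$ with $c_1'=u_2A$ and $c_2'=-\theta_0'A$. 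Boundedness at $+\infty$ forces the growing mode off there, i.e. $c_2(\rho)=\int_\rho^{+\infty}\theta_0'A$, while boundedness at $-\infty$ forces $c_2(\rho)=-\int_{-\infty}^\rho\theta_0'A$; the two prescriptions agree precisely when $\int_\R\theta_0'A=0$, which is \eqref{condition-solvability}. Under this condition a bounded solution exists and is determined up to an additive multiple of the kernel $\theta_0'$; the normalization $\psi(0,s,t)=0$ removes this last degree of freedom since $\theta_0'(0)=\tfrac1{\sqrt2}\neq0$, giving existence and uniqueness.

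For the decay \eqref{decay} I would first read off the limit: letting $\rho\to\pm\infty$ in $\mathcal L\psi=A$ and using $f'(\theta_0)\to f'(\pm1)<0$, the constant that \eqref{decay} subtracts off is the equilibrium $\ell^\pm$ of the limiting constant-coefficient equation $-f'(\pm1)\ell^\pm=A^\pm$. Setting $\phi:=\psi-\ell^\pm$ near $\pm\infty$, a short computation gives $\mathcal L\phi=(A-A^\pm)+(f'(\theta_0)-f'(\pm1))\ell^\pm=:B$, and by \eqref{decay-membre-droite} together with \eqref{est-theta} (which controls $f'(\theta_0)-f'(\pm1)=3(1-\theta_0^2)$) the forcing $B$ is $\mathcal O(|\rho|^ie^{-\sqrt2|\rho|})$. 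Writing $\phi=\theta_0'v$ turns $\mathcal L\phi=B$ into the exact first-order identity $((\theta_0')^2v')'=-\theta_0'B$; integrating from $\rho$ to $+\infty$ and using the established boundedness to fix the constant yields $v'(\rho)=(\theta_0')^{-2}\int_\rho^{+\infty}\theta_0'B$, and estimating these weighted integrals recovers the decay of $\phi$, hence of $\psi-\ell^\pm$, at rate $e^{-\sqrt2|\rho|}$. The $(s,t)$-derivatives cost nothing conceptually: since $\mathcal L$ acts only in $\rho$ and is independent of $(s,t)$, applying $D_s^nD_t^l$ to $\mathcal L\psi=A$ gives $\mathcal L(D_s^nD_t^l\psi)=D_s^nD_t^lA$, whose right-hand side satisfies the same hypothesis (with limiting constant $0$ when $n+l\ge1$), so the same representation applies; the $\rho$-derivatives then follow by differentiating the integral formula and bootstrapping through $\psi_{\rho\rho}=-f'(\theta_0)\psi-A$.

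The step I expect to be most delicate is this last decay estimate, because the forcing $B$ decays at \emph{exactly} the rate $e^{-\sqrt2|\rho|}$ of the kernel $\theta_0'$: this is the resonant regime, in which the integral representation tends to produce an extra polynomial factor, so one must check carefully that the exponential rate $e^{-\sqrt2|\rho|}$ is nonetheless inherited, uniformly in $(s,t)\in U\times[0,T]$ and jointly for all derivative orders $m,n,l$. Controlling the weighted integrals $\int_\rho^{\pm\infty}\theta_0'B$ against $(\theta_0')^{-2}$ uniformly, and propagating this through the induction on derivatives, is where the real work lies; the solvability part, by contrast, is a routine instance of the Fredholm alternative for an operator with one-dimensional kernel.
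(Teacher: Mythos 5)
Your proposal follows essentially the same route as the paper: necessity of \eqref{condition-solvability} by multiplying by $\theta_0'$ and integrating by parts, existence and uniqueness via variation of constants around the kernel element $\theta_0'$ (your $c_1,c_2$ representation is exactly the paper's explicit integral formula), and the decay \eqref{decay} read off from that representation together with \eqref{est-theta} and \eqref{decay-membre-droite}. You are in fact more careful than the paper's two-line outline, in particular in flagging the resonance of the forcing with the decaying homogeneous solution $\theta_0'$, which is precisely the point that the paper's ``one then proves \eqref{decay}'' glosses over.
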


\begin{proof} The lemma is rather standard (see
\cite{Ali-Bat-Che}, \cite{A-Hil-Mat} among others) and we only
give an outline of the proof. Multiplying the equation by
${\theta_0}'$ and integrating it by parts, we easily see that the
condition \eqref{condition-solvability} is necessary. Conversely,
suppose that this condition is satisfied. Then, since ${\theta _0}
'$ is a bounded positive solution to the homogeneous equation
$\psi_{\rho\rho} + f'(\theta _0(\rho))\psi=0$, one can use the
method of variation of constants to find the above solution $\psi$
explicitly:
$$
\psi(\rho,s,t)
=-{\theta_0}'(\rho)\int_0^\rho\left({{\theta_0}'}^{-2}(\zeta)
\int_\zeta^\infty A(\xi,s,t){\theta_0}'(\rho)\,d\xi\right)d\zeta.
$$
Using this expression along with the estimates
\eqref{decay-membre-droite} and \eqref{est-theta}, one then proves
\eqref{decay}.
\end{proof}

Note also, that after the construction of the approximate
solutions $\uk$, we shall need the estimate of the lower bound of
the spectrum of a perturbation of the self-adjoint operator
$-\Delta -\ep^{-2}f'(\uk)$ proved in \cite{Che3}. This will be
stated in Section \ref{s:error}.

\subsection{Geometrical preliminaries}\label{ss:geometrie} The
following geometrical preliminaries are borrowed from
\cite{Che-Hil-Log}, to which we refer for more details and proofs.

\medskip

\noindent {\bf Parametrization around $\Gamma$.}  As mentioned
before, we call $\Gamma=\cup_{0\leq t \leq T} (\Gamma_t\times
\{t\})$ the smooth solution of the volume preserving mean
curvature flow \eqref{MMC-vol-preserv}, starting from $\Gamma _0$;
we also denote by $\om _t$ the region enclosed by $\Gamma _t$. Let
$d$ be the signed distance function to $\Gamma$ defined by
\begin{equation}\label{eq:dist}
d (x,t)=
\begin{cases}
-&\hspace{-10pt}\mbox{dist}(x,\Gamma _t)\quad\text{for }x\in\om_t \\
&\hspace{-10pt} \mbox{dist}(x,\Gamma _t) \quad \text{for } x\in\om
\setminus \overline{\om _t}.
\end{cases}
\end{equation}
We remark that $d$ is smooth in a tubular neighborhood of
$\Gamma$, say in
$$
\mathcal
N_{3\delta}(\Gamma_t):=\{x\in\Omega:\,\,|d(x,t)|<3\delta\}\,,
$$
for some $\delta >0$. We choose a parametrization of $\Gamma _t$
by $X_0(s,t)$, with $s\in U \subset \R ^{N-1}$. We denote by
$n(s,t)$ the unit outer normal vector on $\partial \om _t=\Gamma
_t$. For any $0\leq t \leq T$, one can then define a
diffeomorphism from $(-3\delta,3\delta)\times U$ onto the tubular
neighborhood $\mathcal N_{3\delta}(\Gamma_t)$ by
$$
X(r,s,t)=X_0(s,t)+rn(s,t)=x \in \mathcal N_{3\delta}(\Gamma_t),
$$
whose inverse is denoted by $r=d(x,t)$,
$s=S(x,t):=(S^1(x,t),\cdots,S^{N-1}(x,t))$. Then $\nabla d$ is
constant along the normal lines to $\Gamma _t$, and the projection
$S(x,t)$ from $x$ on $\Gamma _t$ is given by
$X_0(S(x,t),t)=x-d(x,t)\nabla d(x,t)$. For $x=X_0(s,t)\in \Gamma
_t$ denote by $\kappa _i(s,t)$ the principal curvatures of $\Gamma
_t$ at point $x$ and by $V(s,t):=(X_0)_t(s,t).n(s,t)$ the normal
velocity of $\Gamma _t$ at point $x$. Then, one can see that
\begin{equation}\label{kappa}
\kappa (s,t):=\sum _{i=1}^{N-1}\kappa _i (s,t)=\Delta
d(X_0(s,t),t),
\end{equation}
\begin{equation}\label{b-un}
b_1(s,t):=-\sum _{i=1}^{N-1}{\kappa _i}^2 (s,t)=-(\nabla d. \nabla
\Delta d)(X_0(s,t),t),
\end{equation}
\begin{equation}
V(s,t):=(X_0)_t(s,t).n(s,t)=-d_t(X(r,s,t),t).
\end{equation}
In particular, $d_t(x,t)$ is independent of $r=d(x,t)$ in a small
enough tubular neighborhood of $\Gamma_t$. Changing coordinates
form $(x,t)$ to $(r,s,t)$, to any function $\phi(x,t)$ one can
associate the function $\tilde \phi (r,s,t)$ by
\begin{equation*}
\tilde \phi (r,s,t)=\phi(X_0(s,t)+rn(s,t),t)\, \quad \text{ or
}\quad \phi(x,t)=\tilde \phi (d(x,t),S(x,t),t).
\end{equation*}

\medskip

 \noindent {\bf The stretched variable.} In order to
describe the sharp transition layers of the solutions $\ue$ around
the limit interface, we now introduce a stretched variable. Let us
consider a graph over $\Gamma _t$ of the form
$$
\Gamma _t ^\ep=\{X(r,s,t):\, r=\ep h_\ep (s,t)\,,s \in U\},
$$
which is expected to represent the 0 level set, at time  $t$, of
the solutions  $\ue$. We define the stretched variable $\rho
(x,t)$ as \lq\lq the distance from $x$ to $\Gamma _t ^\ep$ in the
normal direction, divided by $\ep$", namely
\begin{equation}\label{correction}
\rho (x,t):=\frac{ d (x,t)-\ep h_\ep(S(x,t),t)}{\ep}.
\end{equation}
In the sequel, we use $(\rho,s,t)$ as independent variables for
the inner expansions. The link between the old and the new
variable is
$$
x=\hat
X(\rho,s,t):=X\left(\ep(\rho+h_\ep(s,t)),s,t\right)=X_0(s,t)+\ep(\rho+h_\ep
(s,t))n(s,t).
$$
Changing coordinates form $(x,t)$ to $(\rho,s,t)$, to any function
$\psi(x,t)$ one can associate the function $\hat \psi (\rho,s,t)$
by
\begin{equation}\label{changvar}
\hat \psi (\rho,s,t)=\psi(X_0(s,t)+\ep(\rho+h_\ep(s,t))n(s,t),t),
\quad
\end{equation}
or $\psi(x,t)=\hat \psi (\frac{d(x,t)-\ep
h_\ep(S(x,t),t)}{\ep},S(x,t),t)$. A computation then yields
\begin{equation}\label{dt-D}
\begin{aligned}
\ep^2(\partial_t \psi-\Delta\psi)=&-\hat\psi_{\rho\rho}-\ep(V+\Delta d)\hat\psi_\rho \\
&+\ep^2[\partial^\Gamma_t\hat\psi-\Delta^\Gamma \hat\psi -(\partial^\Gamma_th_\ep-\Delta^\Gamma h_\ep)\hat\psi_\rho]\\
&+\ep^2[2\nabla^\Gamma h_\ep. \nabla^\Gamma
\hat\psi_\rho-|\nabla^{\Gamma}h_\ep|^2\hat\psi_{\rho\rho} ].
\end{aligned}
\end{equation}
where $$
\partial _t ^\Gamma:=\partial _t+\sum_{i=1}^{N-1}
S_t^i\partial _{s^i},\;\nabla ^\Gamma :=\sum_{i=1}^{N-1} \nabla
S^i
\partial _{s^i}, \;\Delta ^\Gamma :=\sum_{i=1}^{N-1}\Delta S^i
\partial _{s^i}+\sum_{i,j=1}^{N-1}\nabla S^i.\nabla S^j \partial
_{s^is^j}.
$$
Here $\Delta d$ is evaluated at $(x,t)=(X_0(s,t)+\ep(\rho+
h_\ep(s,t))n(s,t),t)$, so that \eqref{kappa} and \eqref{b-un}
imply
\begin{equation}\label{expansion-laplacien}
\begin{aligned}
\Delta d &=\Delta d(X_0(s,t)+\ep(\rho+ h_\ep(s,t))n(s,t),t)\\
&\approx \kappa(s,t)-\ep(\rho+h_\ep(s,t))b_1(s,t)-\sum_{i\geq2}
\ep^i(\rho+h_\ep(s,t))^i b_i(s,t),
\end{aligned}
\end{equation}
where $b_i(s,t)$ ($i\geq 2$) are some given functions only
depending on $\Gamma _t$.

Last, define $$ \ep J^\ep (\rho,s,t):=\partial \hat X
(\rho,s,t)/\partial (\rho,s) $$ the Jacobian of the transformation
$\hat X$ so that, in particular, $dx=\ep
J^\ep(\rho,s,t)\,dsd\rho$. Then, for all $\rho\in\R $, $s\in U$
and $0\leq t\leq T$, we have
\begin{equation}\label{Jacobi}
J^{\ep}(\rho,s,t)=\prod_{i=1}^{N-1}
[1+\ep(\rho+h^{\ep}(s,t))\kappa_i(s,t)].
\end{equation}

\section{Formal asymptotic expansions}\label{s:formal}

In this section, we perform formal expansions for the solutions
$\ue(x,t)$ of \eqref{mass-modifiedbis}. We start by {\it outer
expansions} to represent the solutions \lq\lq far from the limit
interface", then make {\it inner expansions} to describe the sharp
transition layers. Last, expansions of the nonlocal term $\lame
(t)$ are performed. In the meanwhile we shall also discover the
expansions of the correction terms $h_\ep(s,t)$ defined in
\eqref{correction}.

\medskip

We assume that the solutions $\ue(x,t)$ are of the form
\begin{equation}\label{u-out}
\ue(x,t)\approx \ue^\pm(t):=\pm 1+\ep u^\pm_1(t)+\ep^2
u^\pm_2(t)+\cdots \quad\text{(outer expansions),}
\end{equation}
for $x\in \Omega _t$ (corresponding to $\ue ^-(t)$), $x\in \Omega
\setminus \Omega _t$ (corresponding to $\ue ^+(t)$), and away from
the interface $\Gamma _t$, say in the region where $|d(x,t)|\geq
\sqrt \ep$ as we expect the width of the transition layers to be
$\mathcal O (\ep)$. Near the interface $\Gamma _t$, i.e. in the
region where $|d(x,t)|\leq \sqrt \ep$, we assume that the function
$\hat \ue (\rho,s,t)$
--- associated with $\ue(x,t)$ via the change of variables
\eqref{changvar}--- is written as
\begin{equation}\label{u-in}
\hat \ue(\rho,s,t)\approx u_0(\rho,s,t)+\ep u_1(\rho,s,t)+\ep^2
u_2(\rho,s,t)+\cdots\quad\text{(inner expansions).}
\end{equation}
We also require the matching conditions between outer and inner
expansions, that is, for all $i\in\N$,
\begin{equation}\label{condmatch}
u_i(\pm \infty,s,t)=u^{\pm} _i(t)\quad\text{(matching
conditions)},
\end{equation}
for all $(s,t)\in U\times[0,T]$. As we expect the set $\rho=0$ to
be the 0 level set of the solutions (see subsection
\ref{ss:geometrie}) we impose, for all $i\in\N$,
\begin{equation}\label{condtran}
u_i(0,s,t)=0 \quad\text{(normalization conditions)},
\end{equation}
for all $(s,t)\in U\times[0,T]$.

As far as the nonlocal term $\lame(t)$ is concerned we assume the
expansions
\begin{equation}\label{lambda-exp}
\lame (t)\approx
\lambda_0(t)+\ep\lambda_1(t)+\ep^2\lambda_2(t)+\cdots\quad\text{
(nonlocal term)}.
\end{equation}
Last, the distance correcting term $h_\ep(s,t)$ is assumed to be
described by
\begin{equation}\label{h-in}
\ep h_\ep(s,t)\approx \ep h_1(s,t)+\ep^2
h_2(s,t)+\cdots\quad\text{(distance correction term)},
\end{equation}
for all $(s,t) \in U \times [0,T]$.

In the following, by the (complete) expansion at order 1 we mean
$$
\{d(x,t),\lambda_0(t),u_1(\rho,s,t),u_1^\pm(t)\}\quad\text{
(expansion at order 1)},
$$
and by the
(complete) expansion at order $i\geq 2$ we mean
\begin{equation}\label{complete}
\{h_{i-1}(s,t),\lambda _{i-1}(t),u_{i}(\rho,s,t),u_i
^\pm(t)\}\quad\text{ (expansion at order $i\geq 2$)}.
\end{equation}
Let us also recall that we have chosen
$$
f(u)=u(1-u^2), \quad W(u)=\frac{1}{4} (1-u^2)^2.
$$

\subsection{Outer expansions}\label{ss:outer} By plugging the
outer expansions \eqref{u-out} and the expansion
\eqref{lambda-exp} into the nonlocal partial differential equation
\eqref{mass-modifiedbis}, we get
\begin{equation}\label{mass-out}
\ep^2(u^{\pm}_{\ep})'(t)=u^{\pm}_{\ep}(t)-(u^{\pm}_{\ep}(t))^3-\ep\lame(t)(1-(u^{\pm}_{\ep}(t))^2).
\end{equation}
Since $u^\pm _\ep(t)\approx \sum _{i\geq 0}\ep ^i u^\pm _i(t)$,
where $u_0^\pm(t)=\pm1$, an elementary computation yields
$$
-\ep \lame(t)(1-(u^{\pm}_{\ep}(t))^2)\approx \sum _{i\geq 1}
\left(\sum _{p+q=i\,,q\neq 0} \lambda
_p(t)\sum_{k+l=q}u_k^\pm(t)u_l^\pm(t)\right)\ep ^{i+1},
$$
and
$$
(u^{\pm}_{\ep}(t))^3 \approx \sum _{i\geq 0} \left(\sum _{p+q=i}
u_p^\pm(t)\sum_{k+l=q}u_k^\pm(t)u_l^\pm(t)\right)\ep ^{i}.
$$
Hence, collecting the $\ep$ terms in \eqref{mass-out}, we discover
$0=u_1^\pm(t)-3u_1^\pm(t)(u_0 ^\pm(t))^2$ so that $u_1^\pm (t)
\equiv 0$. Next, an induction easily shows that
$$
u_i^\pm(t)\equiv 0 \quad\text{ for all } i\geq 1.
$$
Therefore the outer expansions are already completely known and
are trivial:
\begin{equation}\label{outer-trivial}
u_\ep ^\pm (t) \equiv \pm 1.
\end{equation}
In other words, thanks to the adequate form of the Lagrange
multiplier, the outer expansions are independent of the expansion
of the nonlocal term. This is in contrast with the equation
considered in \cite{Che-Hil-Log}.

\subsection{Inner expansions}\label{ss:inner}

It follows from \eqref{dt-D} that, in the new variables, equation
\eqref{mass-modifiedbis} is recast as
\begin{eqnarray}
&\hat \ue _{\rho\rho}+ \hat \ue -(\hat \ue)^3=\ep\lame(t)(1-(\hat \ue)^2)-\ep(V+\Delta d)\hat \ue _\rho\label{eqpe} \\
&+\ep^2[\partial^\Gamma_t\hat \ue-\Delta^\Gamma \hat \ue
-(\partial^\Gamma_th_\ep-\Delta^\Gamma h_\ep)\hat
\ue _\rho]\nonumber\\
&+\ep^2[2\nabla^\Gamma h_\ep. \nabla^\Gamma \hat \ue
_\rho-|\nabla^{\Gamma}h_\ep|^2\hat \ue _{\rho\rho} ].\nonumber
\end{eqnarray}

\medskip

\noindent {\bf The $\ep ^0$ terms.} By collecting the $\ep^0$
terms above and using the normalization and matching conditions
\eqref{condmatch}, \eqref{condtran} we discover that
$u_0(\rho,s,t)=\theta _0(\rho)$, with $\theta _0$ the standing
wave solution of
\begin{equation}\label{eq-theta-zero}
\left\{\begin{array}{ll} {\theta_0}''+f(\theta _0)=0\quad\text{ on
}\R,\vspace{3pt}\\
\theta_0(-\infty)=-1,\quad \theta _0 (0)=0,\quad
\,\theta_0(\infty)=1.
\end{array} \right.
\end{equation}
Formally, this solution represents the first approximation of the
profile of the transition layers around the interface observed in
the stretched coordinates. Note that since $f(u)=u-u^3$, one can
even compute $\theta_0(\rho)=\tanh(\frac{\rho}{\sqrt{2}})$.

\medskip

\noindent {\bf The $\ep ^1$ terms.} Next, since $\hat \ue
(\rho,s,t)\approx \sum _{i\geq 0}u_i(\rho,s,t)\ep^i$, where
$u_0(\rho,s,t)=\theta _0(\rho)$, an elementary computation yields
\begin{equation}\label{truc}
\ep \lame(t)\left(1-(\hat \ue)^2(\rho,s,t)\right)\approx -\sum
_{i\geq 0} \left(\sum _{p+q=i} \lambda _p(t)\beta _q
(\rho,s,t)\right)\ep ^{i+1},
\end{equation}
where
\[
 \beta_q(\rho,s,t)=\begin{cases}
 {\theta_0}^2(\rho)-1 &\text{if } q=0\vspace{3pt}\\
 \sum_{k+l=q}u_{k}(\rho,s,t)u_{l}(\rho,s,t) &\text{if }q\geq 1,
 \end{cases}
\]
and also
\begin{equation}\label{cube}
(\hat \ue)^3(\rho,s,t) \approx \sum _{i\geq 0} \left(\sum _{p+q=i}
u_p(\rho,s,t)\sum_{k+l=q}u_k(\rho,s,t)u_l(\rho,s,t)\right)\ep
^{i}.
\end{equation}

Hence, plugging the expansion \eqref{expansion-laplacien} of
$\Delta d$ into \eqref{eqpe} and collecting the $\ep$ terms, we
discover
\begin{equation}\label{eq-u1}
\begin{aligned}
{\mathcal L}u_1:=-u_{1\rho\rho}-f'(\theta
_0(\rho))u_1=(V+\kappa)(s,t){\theta_0}'(\rho)-(1-{\theta_0}^2(\rho))\lambda_0(t).
\end{aligned}
\end{equation}
For the above equation to be solvable (see Lemma
\ref{lem:solvability} for details) it is necessary that, for all
$(s,t) \in U\times [0,T]$,
$$
\int _{\R}\mathcal L u_1(\rho,s,t) {\theta _0}'(\rho)\,d\rho=0,
$$
which in turn yields
\begin{equation}\label{eq-mouvement}
V(s,t)=-\kappa (s,t)+\sigma \lambda_0 (t),\quad\quad
\sigma:=\frac{\int_{\R}(1-{\theta_0}^2){\theta_0}'}{\int_{\R}{{\theta_0}'}^2}.
\end{equation}
As seen in subsection \ref{ss:geometrie} the above equation can be
recast as
\begin{equation}\label{condsolv0}
d_t (x,t)=\Delta d (x,t)-\sigma \lambda_0(t) \quad \text{ for }\;
 x\in \Gamma_t.
\end{equation}
Now, in view of \eqref{eq-theta-zero}, we can write $0=\int
_{-\infty}^z({\theta _0}''+f(\theta _0)){\theta _0}'=\int
_{-\infty}^z({\theta _0}''-W'(\theta _0)){\theta _0}'$ and find
the relation $1-{\theta _0}^2=\sqrt 2 {\theta _0}'$, so that
$\sigma =\sqrt 2$. Plugging this and \eqref{eq-mouvement} into
\eqref{eq-u1} we see that $\mathcal L u_1=0$. Therefore, the
normalization $u_1(0,s,t)=0$ implies
\begin{equation}\label{u1-nul}
u_1(\rho,s,t)\equiv 0.
\end{equation}
Again this is in contrast with the equation considered in
\cite{Che-Hil-Log}.

\medskip

 \noindent {\bf The $\ep ^i$ terms ($i\geq 2$).} Now,
taking advantage of $u_0(\rho,s,t)=\theta _0(\rho)$ and of
$u_1(\rho,s,t)\equiv 0$ we identify, for $i\geq 2$, the $\ep ^i$
terms in all terms appearing in \eqref{eqpe}. In the sequel we
omit the arguments of most of the functions and, by convention,
the sum $\sum _{a}^b$ is null if $b<a$.

 Using \eqref{cube} we see
that the $\ep ^i$ term in $\hat \ue _{\rho\rho}+ \hat \ue -(\hat
\ue)^3$ is
\begin{equation}\label{term1}
-\mathcal L u_i-\theta _0 \sum
_{k=2}^{i-2}u_ku_{i-k}-\sum_{p=2}^{i-2}u_p\sum_{k+l=i-p}u_ku_l
\quad\quad\text{ (term 1)}.
\end{equation}

In view of \eqref{truc}, the $\ep ^i$ term in $\ep\lame(t)(1-(\hat
\ue)^2)$ is
\begin{equation}\label{term2}
\lambda_{i-1}(1-{\theta _0}^2)-\sum_{p+q=i-1,q\neq 0}\lambda _p
\sum_{k+l=q}u_ku_l\quad\quad\text{ (term 2)}.
\end{equation}

In order to deal with the term $-\ep(V+\Delta d)\hat \ue_\rho$, we
first note that \eqref{expansion-laplacien} and \eqref{h-in} yield
the following expansion of the Laplacian
\begin{equation}
\label{Laplacian} \Delta d \approx \kappa - \sum _{i \geq
1}\left(b_1h_i+\delta _i\right)\ep ^i,
\end{equation}
with
\begin{equation}\label{delta-i}
\delta _i=\delta_i(\rho,s,t)=\sum _{k=0}^{i} c_k(s,t)\rho^k
\end{equation}
a polynomial function in $\rho$ of degree lower than $i$, whose
coefficients $c_k(s,t)$ are themselves polynomial in
$(h_1,...,h_{i-1})$ which are part of the formal expansions at
lower orders, and in $(b_1,...b_i)$ which are given functions.
Among others, we have $\delta _1(\rho,s,t)=b_1(s,t)\rho$ and
$\delta _ 2(\rho,s,t)=b_2(s,t)(\rho+h_1(s,t))^2$. Combining ${\ue}
_\rho \approx {\theta _0}'+\ep^2 u_{2\rho}+\cdots$ and
\eqref{Laplacian}, we next discover that the $\ep ^i$ term in
$-\ep(V+\Delta d)\hat \ue _\rho$ is
\begin{eqnarray}\label{term3}
&b_1h_{i-1}{\theta _0}'+\delta_{i-1}{\theta _0}'-\left(V
 +\kappa\right)u_{(i-1)\rho}\\
 \nonumber &+\sum _{p=1}^{i-3}\left(b_1h_p+\delta
_p\right)u_{(i-1-p)\rho}\quad\quad\quad\quad\text{ (term 3)}.
\end{eqnarray}

We see that the $\ep ^i$ term in $\ep^2[\partial^\Gamma_t\hat \ue
-\Delta^\Gamma \hat \ue -(\partial^\Gamma_th_\ep-\Delta^\Gamma
h_\ep)\hat \ue _\rho]$ is given by
\begin{equation}\label{term4}
(\partial ^\Gamma _t-\Delta^\Gamma)u_{i-2}-(\partial ^\Gamma
_t-\Delta^\Gamma)h_{i-1}{\theta_0}'-\sum_{p=1}^{i-3}(\partial
^\Gamma _t-\Delta^\Gamma)h_p u_{(i-1-p)\rho}\quad\quad\text{ (term
4)}.
\end{equation}

Note that
$$
|\ep\nabla ^\Gamma  h_\ep|^2 \approx \ep ^2|\nabla ^\Gamma
h_1|^2+\sum_{i\geq 3}\left(2\nabla ^\Gamma h_1 .\nabla ^\Gamma
h_{i-1}+\eta _i\right)\ep ^i,
$$
where
$$
\eta_i=\eta _i(s,t):=\sum_{p+q=i-2,p\neq 0,q\neq 0} \nabla ^\Gamma
h_{p+1}(s,t).\nabla ^\Gamma h_{q+1}(s,t)
$$
depends only on the derivatives of $h_1$,...,$h_{i-2}$. Combining
this with $\hat \ue _{\rho\rho}\approx {\theta _0}''+\ep ^2
u_{2\rho\rho}+\cdots$, we discover that the $\ep^i$ term in $-\ep
^2|\nabla^{\Gamma}h_\ep|^2\hat \ue _{\rho\rho}$ is
\begin{equation}\label{term5}- \beta _i(\nabla
^\Gamma h_1.\nabla ^\Gamma h_{i-1}){\theta _0}''-|\nabla ^\Gamma
h_1|^2u_{(i-2)\rho\rho}-\sum _{k=0}^{i-3}\alpha _k
u_{k\rho\rho}\quad\quad\text{ (term 5)},
\end{equation}
where $\alpha_k=\alpha _k(s,t)$ depends only on the derivatives of
$h_1$,...,$h_{i-2}$ and $\beta _2=0$, $\beta _i=2$ if $i\geq 3$.

Last, since $\nabla ^\Gamma \hat \ue _\rho \approx \ep^2 \nabla ^
\Gamma u_{2\rho}+\cdots$, we see that the $\ep^i$ term in
$\ep^2[2\nabla^\Gamma h_\ep. \nabla^\Gamma \hat \ue _\rho ]$ is
\begin{equation}\label{term6}
2\sum _{k=2}^{i-2}\nabla ^\Gamma h_{i-1-k}.\nabla ^\Gamma
u_{k\rho}\quad\quad\text{ (term 6)}.
\end{equation}
%\begin{equation}\label{term6}
%\nabla^\Gamma h_1.\nabla ^\Gamma u_{(i-2)\rho}+\sum
%_{k=2}^{i-3}\nabla ^\Gamma h_{i-1-k}.\nabla ^\Gamma
%u_{k\rho}\quad\quad\text{ (term 6)}.
%\end{equation}

Hence, in view of the six terms appearing in \eqref{term1},
\eqref{term2}, \eqref{term3}, \eqref{term4}, \eqref{term5},
\eqref{term6}, when we collect the $\ep ^i$ term ($i\geq 2$) in
\eqref{eqpe} we face up to
\begin{equation}\label{ordre-i}
\mathcal L u_i=(\mathcal M ^\Gamma h_{i-1}){\theta _0}'-(1-{\theta
_0}^2)\lambda _{i-1}+\beta _i(\nabla ^\Gamma h_1.\nabla ^\Gamma
h_{i-1}){\theta _0}''+|\nabla ^\Gamma
h_1|^2u_{(i-2)\rho\rho}+R_{i-1}
\end{equation} where ${\mathcal M}^\Gamma$ denotes the linear
operator acting on functions $h(s,t)$ by
\begin{equation}\label{opM}
{\mathcal M}^\Gamma h:= \partial^{\Gamma}_t h-\Delta^\Gamma
h-b_1h,
\end{equation}
and where $R_{i-1}=R_{i-1}(\rho,s,t)$ contains all the remaining
terms. Since it is important that $R_{i-1}$ does not \lq\lq
contain" $h_{i-1}$, we have to leave $|\nabla ^\Gamma
h_1|^2u_{(i-2)\rho\rho}$ for the case $i=2$, but with a slight
abuse of notation we can \lq\lq insert" $|\nabla ^\Gamma
h_1|^2u_{(i-2)\rho\rho}$ in $R_{i-1}$ for $i\geq 3$. As an
example, for $i=2$ we see that
\begin{equation}\label{R2}
R_1(\rho,s,t)=-\delta_{1}(\rho,s,t) {\theta
_0}'(\rho)=-b_1(s,t)\rho {\theta _0}'(\rho),\end{equation} so that
we infer that, for all integers $m$, $n$, $l$,
\begin{equation}
D_\rho^m D_s^n D_t^l[R_1(\rho,s,t)]=\mathcal O(|\rho| e^{-\sqrt
2|\rho|}) \quad \text{ as }\rho\to\pm\infty,
\end{equation}
uniformly in $(s,t)$. Now, for $i\geq 3$, we isolate the \lq\lq
worst terms"
---which are the $\delta _i$'s--- in $R_{i-1}$ and write
\begin{equation}\label{R_i}
R_{i-1}=-\delta_{i-1}{\theta _0}'-\sum _{p=1}^{i-3}\delta
_pu_{(i-1-p)\rho}+r_{i-1},
\end{equation}
where $r_{i-1}=r_{i-1}(\rho,s,t)$ contains all the remaining
terms.

\begin{lem}[Decay of $R_{i-1}$]\label{lem:les-Ri} Let $i\geq 2$. Assume that, for any
$1\leq k\leq i-1$, there holds that, for all integers $m$, $n$,
$l$,
\begin{equation}\label{hyp}
D_\rho^m D_s^n D_t^l[u_k(\rho,s,t)]=\mathcal O(|\rho|^{k-1}
e^{-\sqrt 2|\rho|}) \quad \text{ as }\rho\to\pm\infty,
\end{equation}
uniformly in $(s,t)\in U\times[0,T]$. Then, for all integers $m$,
$n$, $l$
\begin{equation}\label{estimation-R-i}
D_\rho^m D_s^n D_t^l[R_{i-1}(\rho,s,t)]=\mathcal O(|\rho|^{i-1}
e^{-\sqrt 2|\rho|}) \quad \text{ as }\rho\to\pm\infty,
\end{equation}
uniformly in $(s,t)\in U\times[0,T]$.
\end{lem}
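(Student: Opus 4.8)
The plan is to recognise $R_{i-1}$ as a finite sum of products, each built from (i) smooth bounded functions of $(s,t)$, (ii) at most a polynomial in $\rho$, and (iii) one or more ``profile'' factors, and then to track the polynomial degree and the exponential rate of every summand. First I would assemble the decay toolbox. By \eqref{est-theta}, every $\rho$-derivative of $\theta_0\mp 1$, in particular ${\theta_0}'$ and ${\theta_0}''$, is $\mathcal O(e^{-\sqrt 2|\rho|})$. By the standing hypothesis \eqref{hyp}, for $1\leq k\leq i-1$ all derivatives $D_\rho^m D_s^n D_t^l u_k$ are $\mathcal O(|\rho|^{k-1}e^{-\sqrt 2|\rho|})$. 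The geometric data $V,\kappa,b_1,\lambda_p,h_p,\alpha_k,\eta_i,\nabla^\Gamma h_p$ together with the coefficients of $\partial_t^\Gamma,\Delta^\Gamma,\nabla^\Gamma$ are smooth and bounded on $U\times[0,T]$, so all their $(s,t)$-derivatives stay bounded; and by \eqref{delta-i} each $\delta_p$ is a polynomial in $\rho$ of degree $\leq p$ with such bounded coefficients. I will also repeatedly use $u_0=\theta_0$ (bounded), $u_1\equiv 0$, and the identity $1-{\theta_0}^2=\sqrt2\,{\theta_0}'$.

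The base case $i=2$ is immediate since $R_1=-b_1\rho\,{\theta_0}'$ by \eqref{R2}. For $i\geq 3$ I would start from the decomposition \eqref{R_i} and estimate three groups. The dominant group is the single term $-\delta_{i-1}{\theta_0}'$: as $\delta_{i-1}=\mathcal O(|\rho|^{i-1})$ and ${\theta_0}'=\mathcal O(e^{-\sqrt 2|\rho|})$, it is exactly $\mathcal O(|\rho|^{i-1}e^{-\sqrt 2|\rho|})$, which is precisely why it was singled out. The second group $\sum_{p=1}^{i-3}\delta_p u_{(i-1-p)\rho}$ is strictly better: here $2\leq i-1-p\leq i-2$, so \eqref{hyp} gives $u_{(i-1-p)\rho}=\mathcal O(|\rho|^{i-2-p}e^{-\sqrt 2|\rho|})$, and multiplication by $\delta_p=\mathcal O(|\rho|^p)$ yields $\mathcal O(|\rho|^{i-2}e^{-\sqrt 2|\rho|})$. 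The third group is the remainder $r_{i-1}$, which collects the leftover pieces of the six terms \eqref{term1}--\eqref{term6}. Each such piece is either a product carrying two profile factors $u_k$ with $k\geq 2$, hence $\mathcal O(e^{-2\sqrt 2|\rho|})$ up to a polynomial of degree $\leq i-2$ and therefore negligible, or a bounded $(s,t)$-coefficient times a single factor $u_k$ ($1\leq k\leq i-1$) or a derivative of $\theta_0$, hence $\mathcal O(|\rho|^{i-2}e^{-\sqrt 2|\rho|})$; the apparent $u_0$-contributions in \eqref{term1} and \eqref{term2} are handled using $u_1\equiv 0$ and $1-{\theta_0}^2=\sqrt2\,{\theta_0}'$, which reveals the hidden factor $e^{-\sqrt 2|\rho|}$. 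Adding the three groups gives \eqref{estimation-R-i}.

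It remains to check that the estimates survive the derivatives $D_\rho^m D_s^n D_t^l$. The operators $D_s^n D_t^l$ fall either on bounded smooth coefficients (which stay bounded), on the polynomial coefficients of the $\delta_p$ (preserving the $\rho$-degree), or on the $u_k$, where \eqref{hyp} already controls every $(s,t)$-derivative; and $D_\rho^m$ applied to a product of a polynomial with $e^{-\sqrt 2|\rho|}$ leaves both the degree bound and the rate unchanged. Thus all bounds above hold uniformly in $(s,t)\in U\times[0,T]$ after differentiation.

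The hard part will be purely organisational: the genuine analysis is trivial because every summand is a product of exponentially small or bounded factors, so the only real work is the bookkeeping of the remainder $r_{i-1}$ --- enumerating every leftover summand produced by \eqref{term1}--\eqref{term6} (watching the empty-sum conventions and the special profiles $u_0=\theta_0$, $u_1\equiv 0$) and verifying that none of them exceeds the degree-$(i-1)$, rate-$\sqrt2$ budget. The single term that saturates the bound is $\delta_{i-1}{\theta_0}'$, and all others are strictly smaller.
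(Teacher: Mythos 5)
Your proposal is correct and follows essentially the same route as the paper: it starts from the decomposition \eqref{R_i}, isolates $-\delta_{i-1}{\theta_0}'$ as the term saturating the $|\rho|^{i-1}e^{-\sqrt 2|\rho|}$ bound, controls $\sum_p \delta_p u_{(i-1-p)\rho}$ via \eqref{hyp} and the degree-$p$ growth of $\delta_p$, and disposes of $r_{i-1}$ by bookkeeping of the terms \eqref{term1}--\eqref{term6}. You merely spell out in more detail the case analysis that the paper compresses into ``a tedious but straightforward examination''.
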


\begin{proof} Let us have a look at expression \eqref{R_i} of
$R_{i-1}$. By a tedious but straightforward examination we see
that $r_{i-1}(\rho,s,t)$ depends only on
\begin{itemize}
\item $V(s,t)$, $\kappa (s,t)$, $b_1(s,t),...,b_{i}(s,t)$ which
are bounded given functions \item $\lambda
_0(t),...,\lambda_{i-2}(t)$ \item $h_1(s,t),...,h_{i-2}(s,t)$ and
their derivatives w.r.t. $s$ and $t$ \item $u_0(\rho,s,t)=\theta
(\rho), u_1(\rho,s,t)=0,...,u_{i-1}(\rho,s,t)$ and their
derivatives w.r.t. $\rho$, $s$ and $t$
\end{itemize}
in such a way that it is $\mathcal O(|\rho|^{i-2}e^{-\sqrt 2
|\rho|})$ as $\rho\to\pm\infty$. Concerning the term
$$
-\delta_{i-1}( \rho,s,t){\theta _0}'(\rho)-\sum _{p=1}^{i-3}\delta
_p(\rho,s,t)u_{(i-1-p)\rho}, $$ the fact that it behaves like
\eqref{estimation-R-i} follows from  \eqref{hyp} and the fact that
$\delta _p(\rho,s,t)$ grows like $|\rho|^p$, as seen in
\eqref{delta-i}.
\end{proof}

Now, in virtue of Lemma \ref{lem:solvability}, the solvability
condition for equation \eqref{ordre-i} yields, for all $(s,t)$,
$$
(\mathcal M ^\Gamma h_{i-1})(s,t)\int _{\R} {{\theta
_0}'}^2-\lambda _{i-1}(t)\int _{\R}(1-{\theta _0}^2){\theta
_0}'+\int_{\R} R_{i-1}(\cdot,s,t){\theta _0}'=0.
$$
Note that the term $-\beta _i(\nabla ^\Gamma h_1.\nabla ^\Gamma
h_{i-1}){\theta _0}''$ does not appear above since $\int _{\R}
{\theta _0}''{\theta _0}'=0$. Note also that the term $-|\nabla
^\Gamma h_1|^2u_{(i-2)\rho\rho}$ does not appear for the same
reason if $i=2$, and because it can be \lq\lq hidden" in $R_{i-1}$
for $i\geq 3$ without altering the fact that $R_{i-1}$ does not
depend on $h_{i-1}$. The above equality can be recast as
\begin{equation}\label{eq-hi}
({\mathcal M}^\Gamma
h_{i-1})(s,t)=\sigma\lambda_{i-1}(t)-{\sigma^*}\int_\R
R_{i-1}(\rho,s,t) {\theta_0}'(\rho)\, d\rho,
\end{equation}
with $\sigma$ defined in \eqref{eq-mouvement} and
$\sigma^*:=\left(\int _\R {\theta _0}'^2\right)^{-1}$. Note that,
thanks to $1-{\theta _0}^2=\sqrt 2 {\theta _0}'$, we have $\sigma
=\sqrt 2$ (as seen before) and also $\sigma ^*=\frac 34 \sqrt 2$.

Let us have a look at $i=2$. From \eqref{R2} and the fact that
$\int _{\R}\rho {{\theta_0}'}^2(\rho)\,d\rho=0$ (odd function), we
see that \eqref{eq-hi} reduces to
\begin{equation}\label{eq-h1}
({\mathcal M}^\Gamma h_{1})(s,t)=\sigma\lambda_{1}(t).
\end{equation}
Assume that $h_1$ satisfies the above equation. Then since
$u_1\equiv 0$ trivially satisfies \eqref{hyp}, Lemma
\ref{lem:les-Ri} implies that  $R_1(\rho,s,t)$ together with  its
derivatives are $\mathcal O(|\rho|e^{-\sqrt 2 |\rho|})$ as $\rho
\to \pm \infty$. It follows from Lemma \ref{lem:solvability} that
\begin{equation}\label{equation-deux}
\mathcal L u_2=(\mathcal M ^\Gamma h_{1}){\theta _0}'-(1-{\theta
_0}^2)\lambda _{1}+|\nabla ^\Gamma h_1|^2{\theta _0}''+R_1,
\end{equation}
admits a unique solution $u_2(\rho,s,t)$ such that $u_2(0,s,t)=0$,
which additionally satisfies $D_\rho^m D_s^n
D_t^l[u_2(\rho,s,t)]=\mathcal O(|\rho|e^{-\sqrt 2 |\rho|})$.

Now, an induction argument straightforwardly concludes the
construction of the inner expansions.

\begin{lem}[Construction by induction]\label{lem:construction-inner}
Let $i\geq 2$. Assume that, for all $1\leq k\leq i-1$ the term
$u_k$ is constructed such that
\begin{equation}\label{decay-termes-avant}
D_\rho^m D_s^n D_t^l[u_k(\rho,s,t)]=\mathcal O(|\rho|^{k-1}
e^{-\sqrt 2|\rho|}) \quad \text{ as }\rho\to\pm\infty,
\end{equation}
uniformly in $(s,t)\in U\times[0,T]$. Assume moreover that
$h_{i-1}(s,t)$ satisfies the solvability condition \eqref{eq-hi}.
Then one can construct $u_i(\rho,s,t)$ solution of \eqref{ordre-i}
such that $u_i(0,s,t)=0$ and
\begin{equation}\label{decay-terme}
D_\rho^m D_s^n D_t^l[u_i(\rho,s,t)]=\mathcal O(|\rho|^{i-1}
e^{-\sqrt 2|\rho|}) \quad \text{ as }\rho\to\pm\infty,
\end{equation}
uniformly in $(s,t)\in U\times[0,T]$.
\end{lem}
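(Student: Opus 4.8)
The plan is to read off $u_i$ directly from equation \eqref{ordre-i} by applying Lemma \ref{lem:solvability} to its right-hand side, which I will denote by $A_i(\rho,s,t)$. The point is that the two hypotheses of that lemma --- the solvability condition and the exponential decay of the data --- are exactly what the standing assumptions of the present lemma supply: the solvability condition is encoded in the assumption that $h_{i-1}$ satisfies \eqref{eq-hi}, while the decay of $A_i$ follows from the inductive estimates \eqref{decay-termes-avant} together with Lemma \ref{lem:les-Ri}. The normalization $u_i(0,s,t)=0$ then singles out the unique bounded solution.

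First I would verify the solvability condition \eqref{condition-solvability}, i.e.\ $\int_\R A_i\,\theta_0'\,d\rho=0$. Pairing each summand of $A_i$ with $\theta_0'$, the contribution of $\beta_i(\nabla^\Gamma h_1.\nabla^\Gamma h_{i-1})\theta_0''$ vanishes since $\int_\R\theta_0''\theta_0'\,d\rho=\tfrac12[(\theta_0')^2]_{-\infty}^{\infty}=0$; for $i=2$ the term $|\nabla^\Gamma h_1|^2 u_{0\rho\rho}=|\nabla^\Gamma h_1|^2\theta_0''$ drops out for the same reason, whereas for $i\ge3$ it has already been absorbed into $R_{i-1}$. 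What remains is $(\mathcal M^\Gamma h_{i-1})\int_\R(\theta_0')^2-\lambda_{i-1}\int_\R(1-\theta_0^2)\theta_0'+\int_\R R_{i-1}\theta_0'$, and its vanishing is, after dividing by $\int_\R(\theta_0')^2$ and recalling $\sigma$ and $\sigma^*$, precisely \eqref{eq-hi} (reducing to \eqref{eq-h1} when $i=2$). Since $h_{i-1}$ is assumed to satisfy \eqref{eq-hi}, the solvability condition holds, and the first part of Lemma \ref{lem:solvability} produces a unique bounded $u_i(\cdot,s,t)$ with $u_i(0,s,t)=0$.

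It then remains to check the decay hypothesis \eqref{decay-membre-droite} for $A_i$, so as to invoke the second part of Lemma \ref{lem:solvability} (with the generic integer there taken to be $i-1$). I claim $A_i$ satisfies \eqref{decay-membre-droite} with limits $A^\pm=0$ and exponent $i-1$: the summands carrying $\theta_0'$, $\theta_0''$ or $1-\theta_0^2=\sqrt2\,\theta_0'$ decay like $\mathcal O(e^{-\sqrt2|\rho|})$ by \eqref{est-theta}, with smooth bounded $(s,t)$-prefactors (built from $V$, $\kappa$, the $b_j$ and $h_1,\dots,h_{i-1}$); the term $|\nabla^\Gamma h_1|^2 u_{(i-2)\rho\rho}$, present only for $i=2$ where it equals $|\nabla^\Gamma h_1|^2\theta_0''$, decays exponentially; and $R_{i-1}$ is controlled by Lemma \ref{lem:les-Ri}, whose hypothesis \eqref{hyp} is exactly the inductive assumption \eqref{decay-termes-avant}, giving $D_\rho^m D_s^n D_t^l R_{i-1}=\mathcal O(|\rho|^{i-1}e^{-\sqrt2|\rho|})$ uniformly in $(s,t)$. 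Hence $A_i$ and all its derivatives are $\mathcal O(|\rho|^{i-1}e^{-\sqrt2|\rho|})$ with vanishing limits at $\pm\infty$, and \eqref{decay} with $A^\pm/f'(\pm1)=0$ yields exactly \eqref{decay-terme}, closing the induction.

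The delicate point is the solvability bookkeeping rather than any analytic difficulty: one must make sure that, after pairing with $\theta_0'$, every term other than those producing $\mathcal M^\Gamma h_{i-1}$ and $\lambda_{i-1}$ either cancels outright or has been hidden inside $R_{i-1}$ in a way that keeps $R_{i-1}$ independent of $h_{i-1}$ --- otherwise the solvability relation would fail to decouple into \eqref{eq-hi}. I note finally that the vanishing limits $A^\pm=0$ are consistent with the trivial outer expansions $u_i^\pm\equiv0$ ($i\ge1$) obtained in subsection \ref{ss:outer}, so the matching conditions \eqref{condmatch} hold automatically; no comparison principle intervenes, the whole argument being an assembly of Lemmas \ref{lem:solvability} and \ref{lem:les-Ri}.
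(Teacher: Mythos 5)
Your proof is correct and follows exactly the route the paper intends (and sketches just before the lemma's statement): verify the solvability condition \eqref{condition-solvability} reduces to \eqref{eq-hi} after the $\int_\R \theta_0''\theta_0'=0$ cancellations, then feed the decay of $R_{i-1}$ from Lemma \ref{lem:les-Ri} (whose hypothesis \eqref{hyp} is the inductive assumption \eqref{decay-termes-avant}) into the second part of Lemma \ref{lem:solvability} with $A^\pm=0$ to get \eqref{decay-terme}. The paper leaves this as "an induction argument straightforwardly concludes," and your write-up supplies precisely that argument with no gaps.
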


\begin{rem} Note that the cancellation $u_1\equiv 0$ implies that the term $u_i(\rho,s,t)$
appearing in the expansion of the solutions of
\eqref{mass-modified} behaves like $\mathcal
O(|\rho|^{i-1}e^{-\sqrt 2 |\rho|})$, where the term
$u_i(\rho,s,t)$ appearing in the expansion of the solutions of
\eqref{mass-cons} behaves like $\mathcal O(|\rho|^{i}e^{-\sqrt 2
|\rho|})$ (see \cite{Che-Hil-Log}).
\end{rem}

\subsection{Expansions of the nonlocal term $\lame (t)$ and the distance correction term
$h_\ep(s,t)$}\label{ss:lambda}

 By following \cite[subsection
5.4]{Che-Hil-Log} with $\sqrt \ep$ playing the role of $\delta$,
we see that an asymptotic expansion of the conservation of the
mass \eqref{masse-conservee} yields
\begin{equation}
0=\frac d {dt} \int _\Omega \ue(x,t)\,dt\approx I_1+I_2+I_3,
\end{equation}
where $I_1=0$, since in our case $\ue ^\pm (t)\equiv \pm 1$, and
\begin{equation}\label{defI2}
I_2:=\int _{|\rho|<1 /\sqrt \ep}\partial _t ^{\Gamma}\hat \ue
(\rho,s,t) \,\ep J^\ep(\rho,s,t)\,d\rho\, ds,
\end{equation}
\begin{equation}\label{defI3}
I_3:=\int_{|\rho|< 1 /\sqrt \ep } (-V-\ep\partial^{\Gamma}_t
h_\ep)(s,t) \,\partial _{\rho} \hat\ue
(\rho,s,t)\,J^{\ep}(\rho,s,t)\,d\rho\,ds,
\end{equation}

Combining $\partial^\Gamma_t:=\partial
_t+\sum_{i=1}^{N-1}S^i_t\partial_{s^i}$ with $u_0(\rho,s,t)=\theta
_0(\rho)$ and $u_1(\rho,s,t)\equiv 0$, we see that
$$
\partial^\Gamma_t \hat \ue (\rho,s,t)\approx \sum _{i\geq 2} \ep ^i[\partial
_t+\sum_{k=1}^{N-1}S^k_t\partial_{s^k}] u_i(\rho,s,t).
$$
In view of the above inner expansions, this implies
$$
\partial^\Gamma_t \hat \ue (\rho,s,t)\approx \sum _{i\geq 2} \ep ^i \mathcal O \left (|\rho|^{i-1}e^{-\sqrt 2
|\rho|}\right),
$$
where $\mathcal O \left (|\rho|^{i-1}e^{-\sqrt 2 |\rho|}\right)$
depends only on expansions at orders $\leq i-1$. By plugging this
into \eqref{defI2}, we get
$$
I_2\approx \sum _{i\geq 3} \ep ^i \gamma_{i-2},
$$
where $\gamma _{i-2}=\gamma_{i-2}(t)$ depends only on expansions
at orders $\leq i-2$.

We now turn to the term $I_3$. We expand
$$
\left(-V-\ep\partial^{\Gamma}_t h_\ep\right)(s,t) \approx
d_t(X_0(s,t),t)-\sum_{i\geq 1}\ep^i \partial^{\Gamma}_t h_i (s,t),
$$
and
$$
\partial _{\rho} \hat\ue (\rho,s,t)\approx {\theta_0}'(\rho)+\sum_{i\geq2}\ep^i
\partial_{\rho}
u_i (\rho,s,t).
$$
Expanding the  Jacobian \eqref{Jacobi} and using \eqref{kappa}, we
get
$$
J^{\ep}(\rho,s,t)\approx 1+\Delta
d(X_0(s,t),t)\,\ep(\rho+h^{\ep}(s,t))+\sum_{i\geq2}\ep^i\mu_{i-1},
$$
where $\mu _{i-1}=\mu _{i-1}(\rho,s,t)$ depends only on expansions
at orders $\leq i-1$. Multiplying the three above equalities, we
see that the integrand in $I_3$ expands  as
$$
{\theta _0}' d_t+\ep{\theta _0}'\left[-\partial _t ^{\Gamma}
h_1+h_1 d_t\Delta d+\rho  d_t\Delta d\right]+\sum _{i\geq 2} \ep^i
{\theta _0}'(-\partial _t ^{\Gamma} h_i+h_i d_t\Delta d+\upsilon
_{i-1}),
$$
where $\upsilon _{i-1}=\upsilon _{i-1}(\rho,s,t)$ depends only on
expansions at orders $\leq i-1$. We integrate this over $s\in U$
and $|\rho|<  1/\sqrt \ep$ and, using $\int_{|\rho|<1/\sqrt \ep}
{\theta_0}'\approx \int_{\R} {\theta_0}'(\rho)\,d\rho=2$ and
$\int_{|\rho|<1/\sqrt \ep}\rho{\theta_0}'(\rho)\,d\rho=0$ (odd
function), we discover
\begin{eqnarray*}
\frac 12 I_3&\approx& \int _U d_t(s,t)\,ds+\ep \int _U(-\partial
_t ^{\Gamma}h_1+(d_t\Delta d)h_1)(s,t)\,ds\nonumber\\
&&+\sum_{i\geq 2} \ep ^i \left[\int _U(-\partial _t
^{\Gamma}h_i+(d_t\Delta d)h_i)(s,t)\,ds  +\omega_{i-1}\right],
\end{eqnarray*}
where $\omega _{i-1}=\omega _{i-1}(t)$ depends only on expansions
at orders $\leq i-1$. Using \eqref{condsolv0} to substitute $d_t$,
\eqref{eq-h1} to substitute $\partial _t ^\Gamma h_1$,
\eqref{eq-hi} to substitute $\partial _t ^\Gamma h_i$, we have
\begin{eqnarray*}
\frac 12 I_3&\approx& \int _U (\Delta d-\sigma \lambda_0)\,ds+\ep
\int _U(-\Delta^{\Gamma}
h_1-b_1h_1-\sigma\lambda_1+(d_t\Delta d)h_1)\,ds\nonumber\\
&&+\sum_{i\geq 2} \ep ^i \left[\int _U(-\Delta^{\Gamma}
h_i-b_1h_i-\sigma\lambda_i +(d_t\Delta d)h_i)\,ds +\zeta
_{i-1}\right],
\end{eqnarray*}
where $\zeta _{i-1}= \zeta_{i-1}(t)$ depends only on expansions at
orders $\leq i-1$.

Last, using $\int_U \Delta^{\Gamma}h_i\,ds=0$, we see that
$I_2+I_3 \approx 0$ reduces to
\begin{align}
\sigma\lambda_0(t)&= \overline{\Delta d(\cdot,t)}\label{eqlambda0}\\
\sigma\lambda_1(t)&= -\overline{[b_1(\cdot,t)-d_t(\cdot,t)\Delta
d(\cdot,t)]h_1(\cdot,t)}\label{eqlambda1}\\
\sigma\lambda_i(t)&= -\overline{[b_1(\cdot,t)-d_t(\cdot,t)\Delta
d(\cdot,t)]h_i(\cdot,t)}+\Lambda_{i-1}(t)\quad (i\geq
2),\label{eqlambdai}
\end{align}
where $\overline{\phi(\cdot)}:=\frac{1}{|U|}\int_U \phi$ denotes
the average of $\phi$ over $\Gamma_t$ (parametrized by $U$), and
$\Lambda_{i-1}(t)$ depends only on expansions at orders $\leq
i-1$. Moreover if we plug \eqref{eqlambda0}, \eqref{eqlambda1} and
\eqref{eqlambdai} into \eqref{condsolv0}, \eqref{eq-h1} and
\eqref{eq-hi}, we have the following closed system for $d$,
$h_1$,.., $h_i$ on $U\times[0,T]$:
\begin{align}
d_t&=\Delta d - \overline{\Delta d(\cdot,t)}\label{eqdistance}\\
\partial_t^{\Gamma} h_1&=\Delta^{\Gamma}h_1+b_1h_1-\overline{[b_1(\cdot,t)-d_t(\cdot,t)\Delta
d(\cdot,t)]h_1(\cdot,t)}\label{eqh1}\\
\partial_t^{\Gamma} h_i&=\Delta^{\Gamma}h_i+b_1h_i-\overline{[b_1(\cdot,t)-d_t(\cdot,t)\Delta
d(\cdot,t)]h_i(\cdot,t)}+\Lambda_{i-1}(t) \quad (i\geq
2).\label{eqhi}
\end{align}

\section{The approximate solutions $\uk$, $\lamk$}\label{s:approx}

In order to construct our desired approximate solutions and prove
Theorem \ref{th:approx}, let us first explain how the previous
section enables to determine, at any order, the outer expansion
\eqref{u-out}, the inner expansion \eqref{u-in}, the expansion of
the nonlocal term \eqref{lambda-exp}, and the expansion of the
distance correction term \eqref{h-in}.

First, as seen before, the outer expansion \eqref{u-out} is
already completely known since $u_i^\pm(t)\equiv 0$ for all $i\geq
1$.

Recall that $\Gamma = \cup_{0\leq  t \leq T} (\Gamma_t\times
\{t\})$ denotes the unique smooth evolution of the volume
preserving mean curvature flow \eqref{MMC-vol-preserv} starting
from $\Gamma _0\subset\subset \Omega$, to which we associate the
signed distance function $d(x,t)$. Hence, defining $\lambda _0(t)$
as in \eqref{eqlambda0} and $u_1(\rho,s,t)\equiv 0$ as in
\eqref{u1-nul}, we are equipped with the first order expansion
\begin{equation}\label{ordrezero}
\{d(x,t),\lambda _0(t),u_1(\rho,s,t)\equiv 0\}.
\end{equation}
Next, since $\Gamma _t$ is a smooth hypersurface without boundary,
there is a unique smooth solution $h_1(s,t)$ to the parabolic
equation \eqref{eqh1}. Assuming $h_1(s,0)=0$ for $s\in U$, we see
that $h_1(s,t)\equiv 0$, which combined with \eqref{eqlambda1}
yields $\lambda _1(t)\equiv 0$. Notice that these cancellations
are consistent with the observation of \cite{Bra-Bre} that \lq\lq
\eqref{mass-modified} has better volume preserving properties than
the traditional mass conserving Allen-Cahn equation
\eqref{mass-cons}". In Section \ref{s:formal}, we have defined
$u_2(\rho,s,t)$ as the solution of \eqref{equation-deux}, which
now reduces to $\mathcal L u_2=-b_1(s,t)\rho {\theta _0}'(\rho)$.
This completes the second order expansion, namely
\begin{equation}\label{ordreun}
\{h_1(s,t)\equiv 0,\lambda _1(t)\equiv 0,u_2(\rho,s,t)\}.
\end{equation}
Now, for $i \geq 2$, let us assume that expansions
$\{h_{k-1}(s,t),\lambda _{k-1}(t),u_k(\rho,s,t)\}$ are constructed
for all $2\leq k \leq i$. Therefore we can construct
$\Lambda_{i-1}(t)$ appearing in \eqref{eqhi}. Assuming
$h_i(s,0)=0$ for $s\in U$, there is a unique smooth solution
$h_{i}(s,t)$ to the parabolic equation \eqref{eqhi}. This enables
to construct $\lambda _i(t)$ via \eqref{eqlambdai}. Now,
$h_i(s,t)$ satisfies the solvability condition \eqref{eq-hi} at
rank $i$, so that Lemma \ref{lem:construction-inner} provides
$u_{i+1}(\rho,s,t)$, the solution of \eqref{ordre-i} at rank $i+1$
with $u_{i+1}(0,s,t)=0$. This completes the construction of the
$i+1$-th order expansion $\{h_i(s,t),\lambda
_i(t),u_{i+1}(\rho,s,t)\}$.

Note also that, from the above induction argument, we also deduce
the behavior \eqref{decay-terme} for all the $u_i(\rho,s,t)$'s.

\medskip

\noindent{\bf Proof of Theorem \ref{th:approx}.} We are now in the
position to construct the approximate solutions as stated in
Theorem \ref{th:approx}. Let us fix an  integer $k>\max (N,4)$. We
define
$$
\begin{aligned}
\rho_{\ep,k}(x,t)&:=\frac 1 \ep \left[d (x,t)-\sum_{i=1}^{k+2}\ep^i h_i(S(x,t),t)\right]=\frac {d_{\ep,k}(x,t)}\ep,\\
u^{in}_{\ep,k}(x,t)&:=\theta_0(\rho_{\ep,k}(x,t))+\sum_{i=1}^{k+3}\ep^i u_i(\rho_{\ep,k}(x,t),S(x,t),t),\\
u^{out}_{\ep,k}(x,t)&:=\tilde u (x,t),\\
\lamk(t)&:=\lambda _0(t)+\sum _{i=1}^{k+2} \ep ^i \lambda _i(t),
\end{aligned}
$$
where $\tilde u$ is the sharp interface limit defined in
\eqref{u-tilde}.  We introduce a smooth cut-off function
$\zeta(z)=\zeta _\ep (z)$ such that
$$
\begin{cases}
\zeta(z)=1 &\text {if}\;|z|\leq \sqrt \ep,\\
\zeta(z)=0 &\text {if}\;|z|\geq 2\sqrt \ep,\\
0\leq z\zeta'(z)\leq 4 &\text {if}\;\sqrt \ep\leq |z|\leq 2\sqrt
\ep.
\end{cases}
$$
For $x\in \bar \Omega$ and $0\leq t\leq T$, we define
$$
\uk ^* (x,t):=\zeta(d(x,t))u^{in}_{\ep,k}(x,t)+[1-\zeta(d(x,t))]
u^{out}_{\ep,k}(x,t).
$$
If $\ep >0$ is small enough then the signed distance $d(x,t)$ is
smooth in the tubular neighborhood $\mathcal N _{3\sqrt \ep}
(\Gamma)$, and so is $u^{in}_{\ep,k}(x,t)$. This shows that $\uk
^*$ is smooth.

Plugging $(\uk ^*(x,t),\lame(t))$ into the left hand side member
of \eqref{uek}, we find a error term $\delta_{\ep,k}^*(x,t)$ which
is such that
\begin{itemize}
\item $\delta_{\ep,k}^*(x,t)=0$ on $\{|d(x,t)|\geq 2\sqrt \ep\}$
since, then, $\uk ^*=\uk ^{out}=\pm 1$, \item $\Vert \delta
 _{\ep,k}^*\Vert _{L^\infty}=\mathcal O(\ep
 ^{k+2})$ on $\{|d(x,t)|\leq \sqrt \ep\}$ since, then,  $\uk ^*=\uk ^{in}$
and  the expansions of Section \ref{s:formal} were done on this
purpose, \item $\Vert \delta
 _{\ep,k}^*\Vert _{L^\infty}=\mathcal O(\ep
 ^{k^*})$, for any integer $k^*$, on $\{\sqrt \ep \leq |d(x,t)|\leq 2 \sqrt \ep\}$ since, then,  the decaying estimates
\eqref{est-theta} and \eqref{decay-terme} imply that $\uk ^*-\uk
^{out}=\uk ^* - \pm 1= \mathcal O (e^{-\frac{\sqrt 2}{2\sqrt
\ep}})$, valid also after any differentiation.
\end{itemize}
Hence $\Vert \delta _{\ep,k}^*\Vert _{L^\infty(\Omega
\times(0,T))} =\mathcal O (\ep^{k+2})$, which is even better than
\eqref{uek}. Also $\uk ^*$ clearly satisfies \eqref{uek-neumann}.

Now, to ensure the conservation of the mass of the approximate
solutions, we add a correcting term (which depends only on time)
and define
$$
\uk(x,t):= \uk ^*(x,t)+\frac 1 {|\Omega|} \int_{\Omega}( \uk ^*
(x,0)- \uk^*(x,t))\,dx,
$$
which then satisfies \eqref{uek-mass-cons}, and still
\eqref{uek-neumann}. Note also that subsection \ref{ss:lambda}
implies that the correcting term
$$
\int_{\Omega} (\uk ^* (x,0)- \uk ^*
(x,t))\,dx=-\int_{\Omega}\int_0^t
\partial_t  \uk ^* (x,\tau)\,d\tau dx
$$
is $\mathcal O (\ep ^{k+2})$ together with its time derivative.
Hence, when we plug $\uk={\uk ^*}+\mathcal O(\ep ^{k+2})$ into the
left hand side member of \eqref{uek}, we find a error term $\delta
_{\ep,k}$ whose $L^\infty$ norm is $\mathcal O(\ep^{k})$. \qed

\section{Error estimate}\label{s:error}

We shall here prove the error estimate, namely Theorem
\ref{th:results}. For ease of notation, we drop most of the
subscripts $\ep$ and write $u$, $\lambda$, $u_k$, $\lambda _k$,
$\delta _k$ for $\ue$, $\lame$, $\uk$, $\lamk$, $\delta_{\ep,k}$
respectively. By $\Vert\cdot \Vert$, $\Vert \cdot \Vert _{2+p}$ we
always mean $\Vert \cdot \Vert _{L^2(\Omega)}$, $\Vert \cdot \Vert
_{L^{2+p}(\Omega)}$ respectively. In the sequel, we denote by $C$
various positive constants which may change from places to places
and are independent on $\ep>0$.

\medskip

Let us define the error
$$
R(x,t):=u(x,t)-u_k(x,t).
$$
Clearly $\Vert R\Vert _{L^\infty} \leq 3$. It follows from the
mass conservation properties \eqref{masse-conservee},
\eqref{uek-mass-cons}, and the initial conditions
\eqref{assumptions-initial} that
\begin{equation}\label{bidule}
\int _\om R(x,t)\,dx=0 \quad \text{ for all } 0\leq t\leq T, \quad
\Vert R(\cdot,0)\Vert =\mathcal O(\ep ^{k-\frac 1 2}).
\end{equation}

 We
successively subtract the approximate equation \eqref{uek} from
equation \eqref{mass-modified}, multiply by $R$ and then integrate
over $\Omega$. This yields
\begin{equation}\label{energie}
\begin{aligned}
\frac{1}{2}\frac{d}{dt}\int _\om R^2=&-\int _\om |\nabla
R|^2+\edeux \int _\om
f'(u_k)R^2\\
& +\edeux \int _\om (f(u)-f(u_k)-f'(u_k)R)R -\int _\om \delta _k
R-\edeux \Lambda,
\end{aligned}
\end{equation}
where
\begin{equation}\label{Lambda}
\Lambda=\Lambda(t):= \int _\om[\ep \lambda(1-u^2)R-\ep \lambda
_k(1-{u_k}^2)R].
\end{equation}
Since $(f(u)-f(u_k)-f'(u_k)R)R=-3u_k R^3-R^4=\mathcal O(R^{2+p})$,
where $p:=\min(\frac 4 N,1)$, we have
$$
\left|\edeux \int _\om (f(u)-f(u_k)-f'(u_k)R)R\right|\leq \edeux C
\Vert R\Vert _{2+p} ^{2+p}\leq \edeux C_1\Vert R \Vert ^p \Vert
\nabla R\Vert ^2,
$$
where we have used the interpolation result \cite[Lemma
1]{Che-Hil-Log}. We also have $\left |\int _\om \delta _k
R\right|\leq \Vert \delta _k \Vert _\infty \Vert R\Vert=\mathcal
O(\ep ^{k})\Vert R \Vert$, so that
\begin{equation}\label{energie2}
\begin{aligned}
\Vert R \Vert \frac d{dt} \Vert R\Vert \leq & -\int _\om |\nabla
R|^2+\edeux \int _\om f'(u_k)R^2\\
&+ \edeux C_1 \Vert R \Vert ^p \Vert \nabla R\Vert ^2+\mathcal
O(\ep ^{k})\Vert R \Vert-\edeux \Lambda .
\end{aligned}
\end{equation}
We shall estimate $\Lambda$ in the following subsection. As
mentioned before, this term is the main difference with the case
of a strictly nonlocal Lagrange multiplier: its analogous for
equation \eqref{mass-cons} is $(\ep \lambda -\ep \lambda _k)\int
_\om R$ which vanishes, see \cite{Che-Hil-Log}.

Since $k>\max(N,4)$ we have $k-\frac 12 >\frac 4 p=\frac 4
{\min(\frac 4 N,1)}$, so that the second estimate in
\eqref{bidule} allows to define $t_\ep>0$ by
\begin{equation}\label{temps}
t_\ep:=\sup \left \{t>0, \forall\,  0\leq \tau \leq t, \Vert
R(\cdot,\tau)\Vert \leq (2C_1) ^{-1/p}\ep ^{4/p}\right\}.
\end{equation}
We need to prove that $t_\ep =T$ and that the estimate $\mathcal O
(\ep ^{4/p})$ is actually improved to $\mathcal O(\ep ^{k-\frac
12})$. In the sequel we work on the time interval $[0,t_\ep]$.

\subsection{Error estimates between the nonlocal/local Lagrange multipliers}\label{ss:penible}

It follows from \eqref{presque-lambda} that the term $\Lambda$
under consideration is recast as \begin{equation}\label{qqch}
\Lambda=\frac A B E- \frac{A_k}{B_k}E_k+\frac{\mathcal O(\ep
^{k+2})}{B_k}E_k, \end{equation}  where
$$
A_k=A_k(t):=\int _\om f(u_k), \; B_k=B_k(t):=\int _\om 1-{u_k}^2,
\; E_{k}=E_k(t):=\int _\om (1-{u_k}^2)R,
$$
and $A$, $B$, $E$ the same quantities with $u$ in place of $u_k$.

\begin{lem}[Some expansions]\label{lem:some} We have, as $\ep \to 0$,
$$
A_k=\ep ^2\alpha+\mathcal O(\ep ^3), \quad B_k=\ep \beta +\mathcal
O(\ep^2),
$$
where
$$
\alpha=\alpha(t):=\int _{U} \sum _{i=1}^{N-1} \kappa _i(s,t)\,ds
\int _\R \rho f(\theta _0(\rho))\,d\rho, \quad  \beta:=2\sqrt 2
|U|,
$$
and
$$
E_k=\mathcal O(\sqrt \ep \Vert R\Vert).
$$
\end{lem}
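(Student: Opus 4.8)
The plan is to exploit the fact that all three integrands vanish identically outside a tubular neighborhood of $\Gamma$ (where $u_k=\pm1$), and to compute in the stretched variables $(\rho,s,t)$ using $dx=\ep J^\ep(\rho,s,t)\,d\rho\,ds$. The two structural facts that make everything work are the cancellations $u_1\equiv0$ and $h_1\equiv0$ established in Section~\ref{s:formal}, which give $u_k=\theta_0(\rho)+\mathcal O(\ep^2)$ and, via \eqref{Jacobi}, $J^\ep=1+\ep\rho\,\kappa(s,t)+\mathcal O(\ep^2)$; together with the identity $1-\theta_0^2=\sqrt2\,\theta_0'$ proved in subsection~\ref{ss:inner}. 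In each integral I would first split $\om$ into $\{|d|\le 2\sqrt\ep\}$ and its complement: on the complement $u_k=\pm1$, so the integrand is zero, while replacing $\int_{|\rho|<1/\sqrt\ep}$ by $\int_\R$ costs only $\mathcal O(e^{-\sqrt2/\sqrt\ep})$ thanks to the decay estimates \eqref{est-theta} and \eqref{decay-terme}.

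For $B_k$, after the change of variables the leading contribution is $\ep\int_U ds\int_\R(1-\theta_0^2)\,d\rho$; using $1-\theta_0^2=\sqrt2\,\theta_0'$ and $\int_\R\theta_0'\,d\rho=\theta_0(+\infty)-\theta_0(-\infty)=2$ gives $\ep\cdot|U|\cdot2\sqrt2=\ep\beta$, with all remaining terms $\mathcal O(\ep^2)$ precisely because $u_1\equiv0$ pushes the first correction of $1-u_k^2$ to order $\ep^2$.

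For $A_k$ the key observation is that the naive $\ep^1$ term vanishes: since $\theta_0''+f(\theta_0)=0$, one has $\int_\R f(\theta_0)\,d\rho=-\int_\R\theta_0''\,d\rho=-[\theta_0']_{-\infty}^{+\infty}=0$. Hence the first surviving contribution is the $\ep^2$ term obtained by pairing $f(\theta_0)$ with the linear Jacobian correction $\ep\rho\,\kappa$ (the $u_2$-correction of $f(u_k)$ only enters at order $\ep^3$, and $h_1\equiv0$ kills the $h$-dependence of $J^\ep$ at this order), which is exactly $\ep^2\int_U\kappa(s,t)\,ds\int_\R\rho\, f(\theta_0(\rho))\,d\rho=\ep^2\alpha$, recalling $\kappa=\sum_{i}\kappa_i$.

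Finally, $E_k$ follows immediately from Cauchy--Schwarz, $|E_k|\le\Vert 1-u_k^2\Vert\,\Vert R\Vert$, together with the bound $\Vert 1-u_k^2\Vert^2=\int_\om(1-u_k^2)^2=\ep\int_U ds\int_\R 2\,{\theta_0'}^2\,d\rho+\mathcal O(\ep^2)=\mathcal O(\ep)$, obtained by the same change of variables (using again $1-\theta_0^2=\sqrt2\,\theta_0'$). The only delicate bookkeeping is checking that the remainders are genuinely of the stated order---in particular controlling the transition region $\sqrt\ep\le|d|\le 2\sqrt\ep$ where the cut-off acts, and the spatially constant mass-correction term of size $\mathcal O(\ep^{k+2})$ present in $u_k$---but both are harmless given the exponential decay in $\rho$ and the fact that $f(\pm1)=0$, so no genuine obstacle arises.
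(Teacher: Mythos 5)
Your proposal is correct and follows essentially the same route as the paper's proof: reduce to $u_k^*$ (the $\mathcal O(\ep^{k+2})$ mass correction being harmless), change variables to $(\rho,s,t)$ with $dx=\ep J^\ep\,d\rho\,ds$, use $u_k=\theta_0+\mathcal O(\ep^2)$ and $J^\ep=1+\ep\rho\sum_i\kappa_i+\mathcal O(\ep^2)$, kill the $\mathcal O(\ep)$ term of $A_k$ via $\int_\R f(\theta_0)\,d\rho=0$, and get $E_k$ by Cauchy--Schwarz. The only cosmetic difference is that you justify $\int_\R f(\theta_0)=0$ by integrating $\theta_0''+f(\theta_0)=0$ rather than by oddness of $f(\theta_0(\rho))$ as the paper does; both are valid.
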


\begin{proof} We have seen in Section \ref{s:approx} that $u_k=u_k^*+\mathcal O(\ep ^{k+2})$ so it is enough
to deal with $A_k^*$, $B_k^*$ and $E_k^*$. The lemma is then
rather clear from the expansions of Section \ref{s:formal}. We
have
$$
\begin{aligned}
A_k^*&=\int _{|d(x,t)|\leq 2\sqrt \ep} f(u_k ^*)(x,t)\,dx=\int
_{|d(x,t)|\leq \sqrt \ep}f(u_k ^*)(x,t)\,dx+\mathcal O
(e^{-\frac{\sqrt 2}{2\sqrt
\ep}})\\
&=\int _U \int _{|\rho|\leq1/ \sqrt
\ep}f(\theta_0(\rho)+O(\ep^2))\ep J^\ep(\rho,s,t)
\,dsd\rho+\mathcal O (e^{-\frac{\sqrt 2}{2\sqrt \ep}}).
\end{aligned}
$$
Using $J^\ep(\rho,s,t)=1+\ep\rho \sum _{i=1}^{N-1}\kappa
_i(s,t)+\mathcal O(\ep ^2)$ and $\int _{|\rho|\leq 1/ \sqrt \ep}
f(\theta_0(\rho))\,d\rho=0$ (odd function), one obtains the
estimate for $A_k^*$. The estimate for $B_k^*$ follows the same
lines and is omitted. Last, the H\"older inequality yields
$|E_k|\leq (\int _\om (1-{u_k}^2)^2)^{1/2} \Vert R\Vert=\mathcal
O(\sqrt \ep\Vert R\Vert)$ since, again, $dx=\ep
J^\ep(\rho,s,t)\,ds\,d\rho$.
\end{proof}

As a first consequence of the above lemma, it follows from
\eqref{qqch} that
\begin{equation}\label{Lambda2}
\Lambda=\frac A B E- \frac{A_k}{B_k}E_k+\mathcal O(\ep ^{k+\frac 3
2})\Vert R\Vert.
\end{equation}
Next, in view of the above lemma, $u=u_k+R$ and $\Vert R \Vert
=\mathcal O (\ep ^{4/p})$, we can thus perform the following
expansions
$$
\begin{aligned}
A&=A_k+\int _\om (1-3{u_k}^2)R -3\int _\om u_k R^2 -\int _\om R^3\\
&=A_k+3 E_k-3\int _\om u_k R^2 +\mathcal O(\Vert R\Vert
_{2+p}^{2+p}),
\end{aligned}
$$
since $\int _\om R=0$,
$$
\begin{aligned}
B^{-1}&={B_k}^{-1}\left(1-\frac {2\int _\om u_k R}{B_k}-\frac{\int _\om R^2}{B_k}\right)^{-1}\\
&={B_k}^{-1}\left(1+\frac {2\int _\om u_k R}{B_k}+\frac{\int _\om
R^2}{B_k}+\left(\frac {2\int _\om u_k R}{B_k} \right)^2+\mathcal
O\left(\frac{\Vert R\Vert ^3}{\ep ^3}\right)\right),
\end{aligned}
$$
and
$$
\begin{aligned}
E=E_k-2\int _\om u_k R^2+\mathcal O(\Vert R\Vert _{2+p}^{2+p}).
\end{aligned}
$$
It follows that, using $E_k=\mathcal O(\sqrt \ep \Vert R\Vert)$
and $A_k=\mathcal O(\ep ^2)$ (see Lemma \ref{lem:some}),
$$
\begin{aligned}
AE=& A_kE_k-2A_k \int _\om u_k R^2+\mathcal O(\ep ^2 \Vert R\Vert _{2+p}^{2+p})+3{E_k}^2+\mathcal O(\sqrt \ep \Vert R \Vert ^3)\\
&+\mathcal O(\sqrt \ep
\Vert R\Vert\,\Vert R\Vert _{2+p}^{2+p})+\mathcal O(\sqrt \ep \Vert R \Vert ^3) +\mathcal O(\Vert R \Vert ^4)+\mathcal O(\Vert R \Vert^2\, \Vert R\Vert _{2+p}^{2+p})\\
&+\mathcal O (\Vert R\Vert _{2+p}^{2+p}\sqrt \ep \Vert R
\Vert)+\mathcal O (\Vert R\Vert _{2+p}^{2+p}\, \Vert R \Vert^2)
+\mathcal O(\Vert R\Vert _{2+p}^{4+2p})\\
=& A_k E_k +3 {E_k}^2-3E_k \int _\om u_k R^2 -2A_k \int _\om u_k
R^2+\mathcal O(\ep ^2\Vert R\Vert _{2+p}^{2+p})+\mathcal O(\Vert
R\Vert ^3),
\end{aligned}
$$
since $\Vert R \Vert _{2+p} ^{2+p}=\mathcal O(\Vert R \Vert ^2)$.
Then, using Lemma \ref{lem:some} and a methodical grading of the
$\mathcal O$'s (the typical used arguments being $\Vert R \Vert
_{2+p} ^{2+p}=\mathcal O(\Vert R \Vert ^2)$ and, e.g., $\Vert R
\Vert ^4=\mathcal O(\sqrt \ep \Vert R\Vert ^3)$ by the definition
of $t_\ep$), we arrive at
$$
\begin{aligned}
\frac A B E- \frac{A_k}{B_k}E_k=&{B_k}^{-1}\Big[3{E_k}^2-3E_k\int _\om u_k R^2+\frac{A_k}{B_k}E_k\int _\om 2 u_k R\\
&-2A_k \int _\om u_k R^2 +\mathcal O(\ep ^2\Vert R\Vert
_{2+p}^{2+p})+\mathcal O(\Vert R\Vert ^3)\Big],
\end{aligned}
$$
which in turn implies
\begin{equation*}
\begin{aligned}
\frac A B E- \frac{A_k}{B_k}E_k=&\frac{3{E_k}^2-3E_k\int _\om u_k R^2+\frac{A_k}{B_k}E_k\int _\om 2 u_k R}{B_k}\\
&-\frac{A_k}{B_k}\int _\om 2 u_k R^2 +\mathcal O(\ep \Vert R\Vert
_{2+p}^{2+p})+\mathcal O(\ep ^{-1}\Vert R\Vert ^3). \end{aligned}
\end{equation*}
Using Lemma \ref{lem:some} again, this implies
\begin{equation}\label{Lambda3}
\begin{aligned}
\frac A B E-
\frac{A_k}{B_k}E_k=&\frac{3{E_k}^2+\frac{A_k}{B_k}E_k\int _\om  2
u_k
R}{B_k}-\frac{A_k}{B_k}\int _\om 2 u_k R^2\\
& +\mathcal O(\ep \Vert R\Vert _{2+p}^{2+p})+\mathcal O(\ep
^{-1}\Vert R\Vert ^3).
\end{aligned}
\end{equation}
The term $-\frac{A_k}{B_k}\int _\om 2 u_k R^2$ is harmless since
it will be handled by the spectrum estimate Lemma
\ref{lem:spectre}. Let us analyze the fraction which is the worst
term. For $M>1$ to be selected later, define $\Vert R \Vert
_\mathcal T$, $\Vert R \Vert _{\mathcal T^c}$, the $L^2$ norms of
$R$ in the tube $\mathcal T:=\{(x,t):\, |d(x,t)|\leq M\ep\}$, the
complementary of the tube respectively:
$$
\Vert R \Vert _\mathcal T ^2:= \int _{\{|d(x,t)|\leq M\ep\}}
R^2(x,t)\,dx,\quad \Vert R \Vert _{\mathcal T^c} ^2:= \int
_{\{|d(x,t)|\geq M\ep\}} R^2(x,t)\,dx.
$$
Observe that the $\mathcal O(\ep)$ size of the tube allows to
write
$$
\left|\int _\mathcal T u_k R\right|\leq \left(\int _\mathcal T
{u_k}^2\right)^{1/2}\left(\int _\mathcal T R^2\right)^{1/2} \leq
C\sqrt \ep \Vert R\Vert _\mathcal T.
$$
Hence, using Lemma \ref{lem:some}, cutting $\int _\om=\int
_\mathcal T + \int _{\mathcal T ^c}$,  we get
$$
\begin{aligned}
\left| \frac{A_k}{B_k} E_k \int _\om 2u_k R \right|\leq & C \ep
|E_k|(\sqrt \ep \Vert R \Vert _\mathcal T +\Vert R \Vert _{\mathcal T^c})\\
\leq & C\ep \sqrt \ep |E_k|\,\Vert R\Vert _\mathcal T +C \ep
^{2/5} {E_k}^2+ C \ep^{8/5} \Vert R \Vert _{\mathcal T^c}^2.
\end{aligned}
$$
As a result
\begin{equation}\label{Lambda5}
\begin{aligned}
\frac{3{E_k}^2+\frac{A_k}{B_k}E_k\int _\om 2 u_k R}{B_k}\geq &
\frac{(3-C \ep ^{2/5}){E_k}^2-C\ep \sqrt \ep |E_k|\,\Vert R\Vert
_\mathcal T}{B_k}-C\ep ^{3/5}\Vert R\Vert
_{\mathcal T^c}^2\\
\geq & \frac{{E_k}^2-C\ep \sqrt \ep |E_k|\,\Vert R\Vert _\mathcal
T}{B_k}-C\ep ^{3/5}\Vert R\Vert _{\mathcal T^c}^2,
\end{aligned}
\end{equation}
for small $\ep >0$. Now, observe that
\begin{equation}\label{Lambda6}
{E_k}^2-C\ep \sqrt \ep |E_k|\,\Vert R\Vert _\mathcal T \geq
\begin{cases}
0 &\text{ if } |E_k|\geq  C\ep \sqrt \ep \Vert R\Vert _\mathcal T\\
 -C^2 \ep ^3 \Vert R\Vert _\mathcal T ^2 &\text{ if } |E_k|\leq  C\ep
\sqrt \ep \Vert R\Vert _\mathcal T.
\end{cases}
\end{equation}
\begin{rem}\label{rem:concentrate} The above inequality is the crucial one. One can
interpret it as follows. Following \cite[Proposition 2]{Bra-Bre},
we understand that $E_k$ behaves like the integral on the
hypersurface $\Gamma _t$:
$$
\ep \int _{d(x,t)=0} R (x,t)\,d\sigma.
$$
If $|E_k|=\left|\int _\om(1-{u_k}^2)R\right|$ is large w.r.t.
$\mathcal O (\ep \sqrt\ep \Vert R\Vert _\mathcal T)$ then
${E_k}^2- C\ep \sqrt \ep|E_k|\Vert R\Vert_\mathcal T \geq 0$,
which has the good sign to control the $L^2$ norm of $R$. In other
words, if the error \lq\lq intends" at concentrating on the
hypersurface, the situation is quite favorable. On the other hand,
if $|E_k|=\left|\int _\om (1-{u_k}^2)R\right|$ is small w.r.t.
$\mathcal O (\ep \sqrt \ep \Vert R\Vert _\mathcal T)$ then we get
the negative control $-\mathcal O(\ep ^2 \Vert R\Vert _\mathcal T
^2)$ (after dividing by $B_k$) which is enough for the Gronwall's
argument to work.
\end{rem}

Putting together \eqref{Lambda2}, \eqref{Lambda3},
\eqref{Lambda5}, \eqref{Lambda6} and $B_k=2\sqrt 2 |U| \ep
+\mathcal O(\ep ^2)$, we arrive at
\begin{equation}\label{LAMBDA}
\begin{aligned}
\Lambda \geq &-\frac{A_k}{B_k} \int _\om 2 u_k R^2 -C \ep ^{3/5}
\Vert R\Vert _{\mathcal T^c}^2-C\ep ^2 \Vert R\Vert _\mathcal
T^2\\&+\mathcal O(\ep ^{k+\frac 32})\Vert R\Vert +\mathcal O(\ep
\Vert R \Vert _{2+p}^{2+p})+\mathcal O(\ep ^{-1}\Vert R\Vert ^3).
\end{aligned}
\end{equation}

\subsection{Proof of Theorem \ref{th:results}}\label{ss:proof}

Equipped with the accurate estimate \eqref{LAMBDA}, we can now
conclude the proof of the error estimate by following the lines of
\cite{Che-Hil-Log}. Combining \eqref{energie2} with \eqref{LAMBDA}
and using the interpolation inequality $\Vert R\Vert _{2+p}
^{2+p}\leq C\Vert R \Vert ^p \Vert \nabla R\Vert ^2$, $\Vert R
\Vert _ \mathcal T\leq \Vert R \Vert$ and $\Vert R \Vert =\mathcal
O (\ep ^2)$ (thanks to the definition of $t_\ep$), we discover
\begin{equation*}
\begin{aligned}
\Vert R \Vert \frac d{dt} \Vert R\Vert \leq& -\int _\om |\nabla
R|^2+\edeux \int _\om
\left(f'(u_k)+\frac{A_k}{B_k} 2u_k\right)R^2\\
& +\edeux 2 C_1 \Vert R \Vert ^p \Vert \nabla R\Vert ^2+\edeux C
\ep ^{3/5}\Vert R\Vert _{\mathcal T^c} ^2\\
 &+C\Vert R\Vert ^2+
\mathcal O(\ep ^{k-\frac 12})\Vert R \Vert.\\
\end{aligned}
\end{equation*}
Since $ \ep^2\left(-\int _\om|\nabla R|^2+\frac 1{\ep ^2} \int
_\om (f'(u_k)+\frac{A_k}{B_k} 2u_k) R^2\right)\leq -\ep ^2 \Vert
\nabla R \Vert ^2+C\Vert R\Vert ^2$, we get
\begin{equation}\label{energie3}
\begin{aligned}
\Vert R \Vert \frac d{dt} \Vert R\Vert \leq & (1-\ep
^2)\left(-\int _\om|\nabla R|^2+\edeux \int _\om
\left(f'(u_k)+\frac{A_k}{B_k} 2u_k\right)
R^2\right)\\
& -\ep ^2 \Vert \nabla R \Vert ^2+\edeux 2 C_1 \Vert R \Vert
^p \Vert \nabla R\Vert ^2\\
& +\edeux C \ep ^{3/5}\Vert R\Vert _{\mathcal T^c} ^2+C\Vert
R\Vert ^2+
\mathcal O(\ep ^{k-\frac 12})\Vert R \Vert\\
\leq & (1-\ep ^2)\left(-\int _\om|\nabla R|^2+\edeux \int _\om
\left(f'(u_k)+\frac{A_k}{B_k} 2u_k\right)
R^2\right)\\
& +\edeux C \ep ^{3/5}\Vert R\Vert _{\mathcal T^c} ^2+C\Vert
R\Vert ^2 +\mathcal O(\ep ^{k-\frac 12})\Vert R \Vert,
\end{aligned}
\end{equation}
in view of the definition of $t_\ep$ in \eqref{temps}. In the
above inequality, let us write $\int _\om =\int _\mathcal T + \int
_{\mathcal T^c}$. In the complementary of the tube, observe that
$$
 \int
_{\mathcal T^c}\left(f'(u_k)+\frac{A_k}{B_k}2u_k+C\ep
^{3/5}\right)R^2= \int _{\{|d(x,t|\geq
M\ep\}}\left(f'(u_k)+\mathcal O(\ep ^{3/5})\right)R^2,
$$
is nonpositive if $M>0$ is large enough; this follows from the
form of the constructed $u_k$ in Section \ref{s:approx} ---
roughly speaking we have $u_k(x,t)=\theta
_0\left(\frac{d(x,t)+\mathcal O(\ep ^2)}\ep\right)+\mathcal O (\ep
^2)$--- $\theta _0(\pm \infty)=\pm 1$ and $f'(\pm 1)<0$. As a
result we collect
\begin{equation}\label{goal}
\begin{aligned}
\Vert R \Vert \frac d{dt} \Vert R\Vert \leq & (1-\ep
^2)\left(-\int _\mathcal T |\nabla R|^2+\edeux \int _\mathcal T
\left(f'(u_k)+\frac{A_k}{B_k} 2u_k\right)
R^2\right)\\
& +C\Vert R\Vert ^2 +\mathcal O(\ep ^{k-\frac 12})\Vert R \Vert.
\end{aligned}
\end{equation}
In some sense, the problem now reduces to a local estimate since
the linearized operator $-\Delta -\ep ^{-2} (f'(u_k)+\frac
{A_k}{B_k}2u_k)$ arises when studying the local unbalanced
Allen-Cahn equation
$$
\partial _t \ue=\Delta \ue
+\edeux\left(f(\ue)-\frac{A_k}{B_k}(1-{\ue}^2)\right),
$$
whose singular limit is \lq\lq mean curvature plus a forcing term"
(see, among others, \cite{A-Hil-Mat}). To conclude we need a
spectrum estimate of the unbalanced linearized operator around the
approximate solutions $u_k$, namely $-\Delta -\ep ^{-2}
(f'(u_k)+\frac {A_k}{B_k}2u_k)$. This directly follows from the
result of \cite{Che3} for the balanced case. For related results
on the spectrum of linearized operators for the Allen-Cahn
equation or the Cahn-Hilliard equation, we also refer to
\cite{Bat-Fif}, \cite{Ali-Fus, Ali-Fus2}, \cite{Mot-Sch2}.

\begin{lem}[Spectrum of the unbalanced linearized operator around
$u_k$ \cite{Che3}]\label{lem:spectre} There is $C^* >0$ such that
$$
-\int _ \mathcal T |\nabla R|^2+\edeux \int _ \mathcal T
\left(f'(u_k)+\frac{A_k}{B_k} 2u_k\right) R^2 \leq C^* \int
_\mathcal T R^2,
$$
for all $0<t\leq T$, all $0<\ep\leq 1$, all $R\in H^1(\Omega)$.
\end{lem}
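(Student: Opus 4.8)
The plan is to view the quadratic form as the one attached to the linearization of the \emph{unbalanced} Allen--Cahn equation around $u_k$, and then to reduce it to the balanced spectral estimate of \cite{Che3}. Setting $c_\ep(t):=A_k(t)/B_k(t)$, the operator under study is $-\Delta-\edeux\bigl(f'(u_k)+2c_\ep u_k\bigr)=-\Delta-\edeux g_\ep'(u_k)$, where $g_\ep(u):=f(u)-c_\ep(1-u^2)$ is a bistable nonlinearity differing from $f$ only by an unbalancing of amplitude $|c_\ep|$. By Lemma~\ref{lem:some} we have $A_k=\mathcal O(\ep^2)$ and $B_k=\beta\ep+\mathcal O(\ep^2)$ with $\beta=2\sqrt2\,|U|>0$, whence $c_\ep=\mathcal O(\ep)$; moreover, by the construction of Section~\ref{s:approx}, inside $\mathcal T$ the approximate solution has the layer structure $u_k(x,t)=\theta_0\bigl(d(x,t)/\ep\bigr)+\mathcal O(\ep)$. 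Thus $g_\ep$ is an $\mathcal O(\ep)$ perturbation of the balanced nonlinearity and $u_k$ is an approximate transition layer, which is exactly the framework of \cite{Che3}.

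First I would record the balanced estimate of \cite{Che3}: there is $C>0$, uniform in $t\in(0,T]$, $\ep\in(0,1]$ and $R\in H^1(\om)$, such that $-\int_{\mathcal T}|\nabla R|^2+\edeux\int_{\mathcal T}f'(u_k)R^2\le C\int_{\mathcal T}R^2$. It then suffices to absorb the extra term $\edeux\int_{\mathcal T}2c_\ep u_k R^2$ into the right-hand side. The crude bound $|u_k|\le1+\mathcal O(\ep^2)$ only yields $\edeux|c_\ep|\int_{\mathcal T}|u_k|R^2=\mathcal O(\ep^{-1})\int_{\mathcal T}R^2$, which is \emph{not} admissible; the cancellation that saves the estimate is the oddness of $\theta_0$ (i.e.\ the balancedness \eqref{int-f}). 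Indeed the obstruction is measured by $\int_\R\theta_0\,{{\theta_0}'}^2\,d\rho$, which vanishes since $\theta_0$ is odd and ${\theta_0}'$ is even.

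To exploit this I would not treat the extra term abstractly but rather carry it through the proof of \cite{Che3}, which rests on a splitting $R=\alpha(s,t)\,{\theta_0}'(\rho)+w$, with $w$ orthogonal to the approximate kernel ${\theta_0}'$ and controlled by the coercivity of the balanced form. On the $\alpha$-mode, the dangerous contribution $\edeux\,2c_\ep\int_{\mathcal T}u_k\,{{\theta_0}'}^2\alpha^2$ is, after the change of variables $dx=\ep J^\ep\,ds\,d\rho$, of order $\edeux\,c_\ep\,\ep\int_\R\theta_0{{\theta_0}'}^2\,d\rho=0$ at leading order, so it survives only through the $\mathcal O(\ep)$ defect of $u_k$ and the $\mathcal O(\ep^2)$ defect of $c_\ep$; these, together with the cross terms in $\alpha$ and $w$, are estimated by Cauchy--Schwarz and absorbed, leaving a net contribution $\mathcal O(1)\int_{\mathcal T}R^2$ that merely enlarges $\C$.

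The main obstacle is precisely this uniformity in $\ep$ of the constant $\C$: the added potential $2\edeux c_\ep u_k$ is of pointwise size $\mathcal O(\ep^{-1})$, so a general perturbation argument would shift the bottom of the spectrum by $\mathcal O(\ep^{-1})$ and destroy the estimate. It is the balancedness of $f$ --- equivalently the oddness of $\theta_0$ and the resulting vanishing of $\int_\R\theta_0{{\theta_0}'}^2$ --- that neutralizes this leading-order shift and keeps $\C$ bounded as $\ep\to0$, thereby allowing the conclusion to follow from \cite{Che3}.
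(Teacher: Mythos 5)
Your argument is correct and takes essentially the same route as the paper: both reduce the claim to the spectral estimate of \cite{Che3}, and the cancellation you isolate, $\int_\R\theta_0\,{{\theta_0}'}^2\,d\rho=0$, is exactly the paper's condition $\int_\R\theta_1\,{{\theta_0}'}^2f''(\theta_0)\,d\rho=0$ (with $\theta_1\equiv 1$ and $f''(u)=-6u$). The only difference is packaging: the paper absorbs the unbalancing term by rewriting $f'(u_k)+\frac{A_k}{B_k}2u_k=f'(\overline{u_k})$ for an $\mathcal O(\ep)$-perturbed profile and applies \cite{Che3} directly to that profile, whereas you treat $2\frac{A_k}{B_k}u_k$ as an additive perturbation and carry it through the modal decomposition $R=\alpha\,{\theta_0}'+w$.
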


\begin{proof} Observe that
$$
u_k(x,t)=\begin{cases} \theta_0\left(\frac{d_k(x,t)}\ep\right)+\mathcal O(\ep ^2) & \text{ if } |d(x,t)|\leq \sqrt \ep\\
\pm 1 +\mathcal O (\ep ^{k+1}) & \text{ if } |d(x,t)|\geq \sqrt
\ep.
\end{cases}
$$
Lemma \ref{lem:some} yields $\frac{A_k}{B_k}=\ep
\frac{\alpha(t)}{\beta}+\mathcal O(\ep ^2)$ so that we can write
$f'(u_k)+\frac{A_k}{B_k} 2u_k=f'(\overline {u_k})$, for some
$\overline{u_k}$ such that
$$
\overline{u_k}(x,t)=\begin{cases}
\theta_0\left(\frac{d_k(x,t)}\ep\right)-\ep
\frac{\alpha(t)}{3\beta} \theta _1\left(\frac{d_k(x,t)}\ep\right)
+\mathcal O(\ep ^2) & \text{ if } |d(x,t)|\leq \sqrt \ep\\
\pm 1 +\mathcal O (\ep ) & \text{ if } |d(x,t)|\geq \sqrt \ep,
\end{cases}
$$
where $\theta _1 \equiv 1$. In particular $\int _\R \theta
_1({\theta _0}')^2 f''(\theta _0)=\int _\R ({\theta _0}')^2
f''(\theta _0)=0$ (odd function) so that $\overline{u_k}$ has the
correct shape for \cite{Che3} to apply: see \cite[shape (3.8) and
proof of Theorem 5.1]{Ali-Bat-Che}, \cite[shape (16)]{Che-Hil-Log}
or \cite[Section 4]{Hen-Hil-Mim} for very related arguments.
Details are omitted.
\end{proof}

Combining the above lemma and \eqref{goal}, we end up with
$$
\frac d {dt} \Vert R \Vert \leq C \Vert R \Vert  +C \ep^{k-\frac
12}.
$$
The Gronwall's lemma then implies that, for all $0\leq t \leq
t_\ep$,
$$
\Vert R(\cdot,t)\Vert \leq (\Vert R(\cdot,0)\Vert+\ep ^{k-\frac
12})e^{Ct_\ep}=\mathcal O (\ep ^{k-\frac 12}),
$$
in view of \eqref{bidule}. Since $k-\frac 12 >\frac 4 p,$ this
shows that $t_\ep =T$ and that the estimate $\mathcal O (\ep
^{4/p})$ is actually improved to $\mathcal O(\ep ^{k-\frac 12})$.
 This completes the proof of Theorem
\ref{th:results}.\qed

\bigskip \noindent {\bf Acknowledgements.} The authors are grateful
to R\'emi Carles and Giorgio Fusco for valuable discussions on
this problem.


\begin{thebibliography}{ABCD}

%\bibitem{A-nonlocal} M. Alfaro, {\it Generation, motion and thickness of transition layers for a nonlocal Allen-Cahn equation},
% Nonlinear Anal. {\bf 72} (2010), no. 7-8, 3324--3336.

\bibitem{A-Dro-Mat} M.~Alfaro, J.~Droniou and H. Matano, {\it Convergence rate of the Allen-Cahn equation to generalized
motion by mean curvature},  J. Evol. Equ. {\bf 12} (2012),
267--294.

\bibitem{A-Hil-Mat} M. Alfaro, D. Hilhorst and H. Matano,
{\it The singular limit of the Allen-Cahn equation and the
FitzHugh-Nagumo system}, J. Differential Equations {\bf 245}
(2008), 505--565.

\bibitem{A-Mat} M. Alfaro and H. Matano,  {\it On the
validity of formal asymptotic expansions in Allen-Cahn equation
and FitzHugh-Nagumo system with generic initial data}, Discrete
Contin. Dyn. Syst. Ser. B. {\bf 17} (2012), 1639--1649.

\bibitem{Ali-Bat-Che} N. D. Alikakos, P. W. Bates et X.~Chen, {\it Convergence of the
Cahn-Hilliard equation to the Hele-Shaw model}, Arch. Rat. Mech.
Anal. {\bf 128} (1994), 165--205.

\bibitem{Ali-Fus} N. D. Alikakos and G. Fusco, {\it The spectrum
of the Cahn-Hilliard operator for generic interface in higher
space dimensions}, Indiana Univ. Math. J. {\bf 42} (1993),
637--674.

\bibitem{Ali-Fus2} N. D. Alikakos and G. Fusco, {\it Slow dynamics for the
Cahn-Hilliard equation in higher space dimensions. I. Spectral
estimates}, Comm. Partial Differential Equations {\bf 19} (1994),
1397--1447.


\bibitem{Bat-Fif}  P. W. Bates and P. C. Fife, {\it Spectral comparison principles for the Cahn-Hilliard and phase-field equations,
and time scales for coarsening},  Phys. D {\bf 43} (1990),
335–-348.

\bibitem{Bra-Bre}  M. Brassel and E. Bretin, {\it A modified phase field approximation for mean
curvature flow with conservation of the volume}, Math. Methods
Appl. Sci. {\bf 34} (2011), 1157--1180.


\bibitem{Bro-Sto} L.~Bronsard and B. Stoth, {\it Volume preserving mean curvature flow
as a limit of nonlocal Ginzburg-Landau equation}, SIAM. J. Math.
Anal. {\bf 28} (1997), 769--807.

\bibitem{Cag-Che} G. Caginalp and X. Chen, {\it Convergence of the phase field model
to its sharp interface limits}, European J. Appl. Math. {\bf 9}
(1998), 417--445.

\bibitem{Cag-Che-Eck} G. Caginalp, X. Chen and C. Eck, {\it Numerical tests of a
phase field model with second order accuracy}, SIAM J. Appl. Math.
{\bf 68} (2008), 1518–-1534.

\bibitem{Che} X.~Chen,
{\it Generation and propagation of interfaces for
reaction-diffusion equations}, J.~Differential Equations {\bf 96}
(1992), 116--141.

\bibitem{Che3} X.~Chen, {\it Spectrums for the Allen-Cahn, Cahn-Hilliard, and phase
field equations for generic interface}, Comm. Partial Differential
Equations  {\bf 19}  (1994), 1371--1395.


\bibitem{Che-Hil-Log} X.~Chen, D.~Hilhorst and E.~Logak,
{\it Mass conserving Allen-Cahn equation and volume preserving
mean curvature flow}, Interfaces Free Bound. {\bf 12} (2010),
527--549.

\bibitem{Che-Gig-Got} Y.~G.~Chen, Y.~Giga and S.~Goto, {\it Uniqueness and existence of viscosity solutions of
generalized mean curvature flow equations}, J.~Diff.~Geom. {\bf
33} (1991), 749--786.

\bibitem{Esc-Sim} J. Escher and G. Simonett, {\it The volume
preserving mean curvature flow near spheres},  Proc. Amer. Math.
Soc.  {\bf 126}  (1998), 2789--2796.

\bibitem{Eva-Son-Sou} L.~C.~Evans,
H.~M.~Soner and P.~E.~Souganidis, {\it Phase transitions and
generalized motion by mean curvature}, Comm.~Pure Appl.~Math. {\bf
45} (1992), 1097--1123.

\bibitem{Eva-Spr1} L.~C.~Evans and J. Spruck,
{\it Motion of level sets by mean curvature I}, J. Differential
Geometry {\bf 33} (1991), 635--681.

%\bibitem{Eva-Spr2} L.~C.~Evans and J. Spruck,
%{\it Motion of level sets by mean curvature II},
%Trans.~Amer.~Math.~Soc. {\bf 330} (1992), 321--332.

%\bibitem{Eva-Spr3} L.~C.~Evans and J. Spruck,
%{\it Motion of level sets by mean curvature III}, J. Geom. Anal.
%{\bf 2} (1992), 121--150.

\bibitem{Gag} M. Gage, {\it On an area-preserving evolution equation for plane
curves}, Nonlinear problems in geometry, Contemp. Math. 51, Amer.
Math. Soc., Providence, RI, 1986, 51--62.

\bibitem{Gar-Sti} H. Garcke and B. Stinner, {\it Second order phase field
asymptotics for multi-component systems}, Interfaces Free Bound.
{\bf 8} (2006), 131–-157.


\bibitem{Gol} D. Golovaty, {\it The volume-preserving motion by mean curvature as an asymptotic limit of reaction-diffusion
equations}, Quart. Appl. Math. {\bf 55} (1997), 243–--298.

\bibitem{Hen-Hil-Mim} M. Henry, D. Hilhorst and M. Mimura, {\it A
reaction-diffusion approximation to an area preserving mean
curvature flow coupled with a bulk equation}, Discrete Contin.
Dyn. Syst. Ser. S {\bf 4} (2011), 125--154.

\bibitem{Hui} G. Huisken, {\it The volume preserving mean
curvature flow}, J. Reine Angew. Math. {\bf 382} (1987), 35--48.

\bibitem{Mot-Sch2} P.~de Mottoni and M.~Schatzman,
{\it Geometrical evolution of developed interfaces},
Trans.~Amer.~Math.~Soc.~{\bf 347} (1995), 1533--1589.

\bibitem{Rub-Ste} J. Rubinstein and P. Sternberg, {\it
Nonlocal reaction-diffusion equations and nucleation}, IMA J.
Appl. Math. {\bf 48} (1992), 249–--264.

\end{thebibliography}
\end{document}